\theoremstyle{thmstyleone}%
\newtheorem{theorem}{Theorem}
\newtheorem{proposition}[theorem]{Proposition}%
\theoremstyle{thmstyletwo}%
\newtheorem{remark}{Remark}%
\newtheorem{lemma}{Lemma}
\newtheorem{corollary}{Corollary}
\theoremstyle{thmstylethree}%
\begin{document}

\title[A family of BB steplengths]{A family of Barzilai-Borwein steplengths from the viewpoint of scaled total least squares}


\author[1]{\fnm{Shiru} \sur{Li}}\email{lishiru@buaa.edu.cn}

\author[1]{\fnm{Tao} \sur{Zhang}}\email{shuxuekuangwu@buaa.edu.cn}

\author*[1]{\fnm{Yong} \sur{Xia}}\email{yxia@buaa.edu.cn}

\affil[1]{\orgdiv{School of Mathematical Sciences}, \orgname{Beihang University}, \orgaddress{ \city{Beijing}, \postcode{100191}, \country{P. R. China}}}


\abstract{The Barzilai-Borwein (BB) steplengths play great roles in practical gradient methods for solving unconstrained optimization problems. Motivated by the observation that the two well-known BB steplengths correspond to the ordinary and the data least squares, respectively, we introduce a novel family of BB steplengths from the viewpoint of scaled total least squares.  Numerical experiments demonstrate that high performance can be received by a carefully-selected BB steplength in the new family.}

\keywords{unconstrained optimization, gradient descent, BB steplength, total least squares}



\maketitle

\section{Introduction}
Consider an unconstrained optimization problem:
\begin{eqnarray*}
\min_{x\in\mathbb{R}^n}&~f(x),
\end{eqnarray*}
where $f:\mathbb R^n\rightarrow \mathbb R$ is continuously differentiable.
The classical iterative formulation of the gradient descent method for this problem reads as
\begin{eqnarray*}
	x_{k+1}=x_{k}-\alpha_k g_k,
\end{eqnarray*}
where $g_k$ is the gradient of the objective function $f$ at the $k$-th iteration point $x_k$, and $\alpha_k$ is a steplength with many choices.
The classical steepest descent method dates back to Cauchy \cite{cauchy1847} where $\alpha_k$ is taken as the exact minimizer of $f(x_k-\alpha g_k)$.
The BB steplengths, due to Barzilai and Borwein \cite{barzilai1988two}, are two popular choices of $\alpha_k$. The idea is using the ordinary least squares to solve the simplified quasi-Newton equations (where the quasi-Newton matrix is replaced with $\alpha^{-1}_k I$)
\begin{eqnarray}
&& \alpha_ky_{k-1}=s_{k-1},\label{qnew2}\\
&& \alpha^{-1}_ks_{k-1}=y_{k-1},\label{qnew1}
\end{eqnarray}
respectively,  where $I$ is the identity matrix, $y_{k-1}=g_{k}-g_{k-1}$ and $s_{k-1}=x_{k}-x_{k-1}$. Then the two BB steplengths are given by
\begin{eqnarray}
&&\alpha^{BB1}_{k}=\frac{1}{\arg\min\limits_{\beta} \|\beta s_{k-1}-y_{k-1}\|^2}=\frac{s^T_{k-1} s_{k-1}}{s^T_{k-1} y_{k-1}},\label{b1}\\
&&\alpha^{BB2}_{k}=\arg\min\limits_{\alpha} \|\alpha y_{k-1}-s_{k-1}\|^2=\frac{s^T_{k-1} y_{k-1}}{y^T_{k-1} y_{k-1}},\label{b2}
\end{eqnarray}
where $\|\cdot\|$ is the standard Euclidean norm.

Barzilai and Borwein \cite{barzilai1988two} demonstrated that the Barzilai-Borwein (BB) methods exhibit R-superlinear convergence when applied to the minimization of two-dimensional strongly convex quadratic functions. In their paper, they also showed that, in the worst case, at most four consecutive iteration steps will have a superlinear convergence step. Based on a new analysis, Dai \cite{dai2013new} proved at most three consecutive steps will have a superlinear convergence step for the same problem. For the extension to strongly convex quadratic functions in three dimensions,
Dai \cite{dai2005asymptotic} first  established the R-superlinear convergence. Generally, for any finite-dimensional strongly convex quadratic function, Raydan \cite{raydan1993barzilai} established the convergence and Dai and Liao \cite{dai2002r} established the R-linear convergence rate of the BB methods.

In the nonquadratic case,
Fletcher \cite{fletcher2005} proved that
for a particular strongly convex function suggested by Raydan \cite{raydan1997barzilai},
the sequence generated by the BB methods will diverge.
Raydan \cite{raydan1997barzilai} introduced the non-monotonic line search technique, which was incorporated into the BB method to ensure global convergence for general functions.

The superior numerical performance of the BB methods attracted widespread attention. Many improvements, variants and extensions of the BB methods are proposed. For example, the cyclic BB method \cite {dai2006cyclic}, the stabilized BB method \cite{2019stabilized}, the accelerated BB method \cite{huang2022acceleration}, the projected BB  methods for large-scale box-constrained quadratic programming \cite{dai2005projected}, and the projected alternating BB methods for variational inequalities \cite{qu2021projected}. Recently, Dai et al. \cite{dai2019family} proposed a family of BB methods by considering a convex combination of the long BB steplength $\alpha_k^{BB1}$ and the short BB steplength $\alpha_k^{BB2}$:
\begin{eqnarray}\label{fBB}
\alpha_k(\tau_k)=\tau_k\alpha_k^{BB1}+(1-\tau_k)\alpha_k^{BB2},
\end{eqnarray}
where $\tau_k\in[0,1]$. Some approaches for choosing $\tau_k$ are also suggested \cite{dai2019family}.

In this paper, we introduce a fresh perspective on the BB steplengths by considering various types of least squares.
We first observe that $\alpha^{BB1}$ (\ref{b1}) and $\alpha^{BB2}$ (\ref{b2}) correspond to the ordinary least squares (LS) and the data least squares (DLS), respectively. Building upon this observation, we propose and analyze a novel family of BB steplengths derived from scaled total least squares (STLS), including a newly introduced BB steplength associated with total least squares.

For minimizing $n$-dimensional strictly convex quadratic functions, the BB method with any of the new family of BB steplengths converges R-superlinearly when $n=2$ and R-linearly for any finite $n$.
Furthermore, we extend these newly devised steplengths to general nonquadratic optimization problems, incorporating a non-monotonic line search technique. Numerical results demonstrate the efficiency of the proposed approach.

The remainder of this paper is organized as follows.  Section 2  introduce a novel family of BB steplengths based on scaled total least squares (STLS). In particular, we focus on studying a new BB steplength associated with total least squares. We establish the convergence of the new family of BB methods in Section 3. Section 4 presents numerical results. Conclusions are made in Section 5.

\section{A new family of BB steplengths}
The main contribution of this section is to derive a new family of BB steplengths from the perspective of  scaled total least squares.
\subsection{Scaled least squares}
We briefly review the family of scaled least squares.

Consider the over-determined linear system $Ax\approx b$, where $A\in \mathbb{R}^{m\times n}$ is the data matrix, and $b\in \mathbb{R}^m$ is the observation vector.

If we assume that only the observation vector $b$ contains a noise $r$, solving
\[
\text{LS distance}=\min\limits_{r,x} \{\|r\|^2:Ax=b+r\}=\min\limits_x \|Ax-b\|^2,
\]
yields the well-known ordinary least squares problem.

The less popular data least squares problem \cite{degroat1993data}  corresponds to the case where only the data matrix $A$ is noised by $E$:
\begin{eqnarray*}
\text{DLS distance}=\min\limits_{E,x} \{\|E\|^2_F:(A+E)x=b\}=\min\limits_x \frac{\|Ax-b\|^2}{\|x\|^2},
\end{eqnarray*}
where $E\in\mathbb{R}^{m\times n}$ and $\|\cdot\|_F$ is the Frobenius norm.

In case both $A$ and $b$ are noised, the total least squares problem reads as follows:
\begin{eqnarray*}
\text{TLS distance}=\min\limits_{E,r,x}\{\|E\|^2_F+\|r\|^2:
(A+E)x=b+r\}=\min\limits_{x}\frac{\|Ax-b\|^2}{\|x\|^2+1},
\end{eqnarray*}
which was firstly proposed in \cite{golub1980analysis}.

The above three kinds of least squares can be written in a unified form, the so-called scaled TLS (STLS) problem \cite{paige2002scaled,huffel2002total}, aiming at solving
\begin{eqnarray}\label{STLS2}
\text{STLS distance}=\min\limits_{E,r,x}\{\|E\|^2_F+\|r\|^2:
(A+E)x\gamma=b\gamma+r\},
\end{eqnarray}
where $\gamma>0$ is a parameter.
Based on a proof  similar to that in \cite{beck2006solution}, we can simplify \eqref{STLS2} as follows.
\begin{lemma} \label{thm2}
For any $\gamma>0$, problem \eqref{STLS2} is equivalent to
\[
\min\limits_{x}\frac{\|Ax-b\|^2}{\frac{1}{\gamma^2}+\|x\|^2},
\]
in the sense that both optimal values are equal and share the same minimizer in terms of $x$.
\end{lemma}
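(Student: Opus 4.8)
The plan is to carry out the minimization in \eqref{STLS2} in stages: fix $x$, minimize over the residual $r$, then over the perturbation matrix $E$, thereby collapsing the problem to an explicit function of $x$. This follows the pattern of the argument in \cite{beck2006solution}.

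First I would eliminate $r$ using the constraint. For fixed $x$, the equation $(A+E)x\gamma = b\gamma + r$ forces $r = \gamma(Ax-b) + \gamma Ex$, so the objective of \eqref{STLS2} becomes, after minimizing over $r$,
\[
\min_{x}\ \min_{E\in\mathbb{R}^{m\times n}}\ \|E\|_F^2 + \gamma^2\|(Ax-b) + Ex\|^2 .
\]
The matrix $E$ enters the inner objective only through the vector $v := Ex \in \mathbb{R}^m$, so I would split the inner minimization: for $x\neq 0$ and a fixed target $v$, the least value of $\|E\|_F^2$ over all $E$ with $Ex = v$ is $\|v\|^2/\|x\|^2$, attained at the rank-one matrix $E = v x^T/\|x\|^2$. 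This is the standard minimum-norm fact, which I would verify by writing $E = v x^T/\|x\|^2 + F$ with $Fx = 0$ and noting that the Frobenius cross term $\langle v x^T, F\rangle_F = v^T F x$ vanishes. The inner problem then reduces to the unconstrained strictly convex quadratic
\[
\min_{v\in\mathbb{R}^m}\ \frac{\|v\|^2}{\|x\|^2} + \gamma^2\|(Ax-b)+v\|^2 .
\]

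Setting the gradient in $v$ to zero yields $v^\star = -\dfrac{\gamma^2\|x\|^2}{1+\gamma^2\|x\|^2}(Ax-b)$, whence $(Ax-b)+v^\star = \dfrac{1}{1+\gamma^2\|x\|^2}(Ax-b)$; substituting back, the optimal inner value is $\dfrac{\gamma^2\|Ax-b\|^2}{1+\gamma^2\|x\|^2}$, which equals $\dfrac{\|Ax-b\|^2}{\frac{1}{\gamma^2}+\|x\|^2}$. Minimizing this over $x$ gives the claimed equivalence, with matching optimal values and matching $x$-minimizers; the optimal perturbations are recovered as $E^\star = v^\star x^T/\|x\|^2$ and $r^\star = \gamma(Ax-b)+\gamma v^\star$. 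The degenerate case $x=0$ I would treat separately, since the rank-one construction divides by $\|x\|^2$: there the constraint forces $r = -\gamma b$ with $E$ free, so the minimum is $\gamma^2\|b\|^2$, which coincides with the value of $\|Ax-b\|^2/(\frac{1}{\gamma^2}+\|x\|^2)$ at $x=0$. No step is a genuine obstacle; the only thing that requires care is keeping straight the roles of $E$, $r$, $v$, and $x$ and confirming that the inner infima are attained.
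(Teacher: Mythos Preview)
Your proof is correct. Both you and the paper fix $x$ and solve the resulting inner problem explicitly, but the mechanics differ: the paper treats the inner problem as a linearly constrained convex quadratic in $(E,r)$ and writes down the KKT conditions, obtaining $E$ and $r$ directly in terms of a Lagrange multiplier; you instead eliminate $r$ via the constraint, collapse the matrix variable $E$ to the vector $v=Ex$ using the rank-one minimum-Frobenius-norm fact, and then minimize a one-variable quadratic in $v$. The two routes arrive at the same optimal $(E,r)$ and the same inner value. Your approach is slightly more elementary (no Lagrange multipliers for a matrix-valued constraint), and your explicit handling of the degenerate case $x=0$ is a nice addition that the paper leaves implicit.
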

\begin{proof}
For any fixed $x$, problem \eqref{STLS2} is a convex quadratic programming problem. The variables $E$ and $r$ are optimal if and only if they satisfy
the KKT conditions:
\[
2E+2\lambda x^T\gamma=0,~2r-2\lambda=0,~(A+E)x\gamma=b\gamma+r.
\]
Solving the above KKT system yields that
\[
r=\frac{\gamma(Ax-b)}{1+\gamma^2\|x\|^2},~
E=-\frac{\gamma^2(Ax-b)x^T}{1+\gamma^2\|x\|^2},
\]
which implies that
\begin{eqnarray*}
\|E\|^2_F+\|r\|^2=\frac{\|Ax-b\|^2}{\frac{1}{\gamma^2}+\|x\|^2},
\end{eqnarray*}
The proof is complete.
\end{proof}

Different $\gamma$ in STLS corresponds to different least squares.
When $\gamma=1$, STLS reduces to TLS. As $\gamma\rightarrow\infty$, STLS approaches to DLS.  As $\gamma\rightarrow 0$, the STLS distance scaled by $1/\gamma^2$ converges to the LS distance.

\subsection{A new family of BB steplengths}
According to Lemma \ref{thm2},  employing STLS to solve the simplified quasi-Newton equations \eqref{qnew2} and \eqref{qnew1} (with $\beta=1/\alpha$) amounts to
\[
\min\limits_\alpha \frac{\|\alpha y_{k-1}-s_{k-1}\|^2}{\frac{1}{\gamma^2}+\alpha^2}~{\rm and}~
\min\limits_\beta \frac{\|\beta s_{k-1}-y_{k-1}\|^2}{\frac{1}{\gamma^2}+\beta^2},\nonumber
\]
respectively.
Their solutions lead to a family of BB steplengths and its ``inverse'' version:
\begin{eqnarray}
\alpha^{BB}_k(\gamma)&=&\frac{ \|s_{k-1}\|^2 -\frac{\|y_{k-1}\|^2}{\gamma^2} +
\sqrt{(\|s_{k-1}\|^2-\frac{\|y_{k-1}\|^2}{\gamma^2})^2+
\frac{4(s_{k-1}^Ty_{k-1})^2}{\gamma^2}}}{2s_{k-1}^Ty_{k-1}},\label{uBB} \\
\alpha^{BB'}_k(\gamma)&=&\frac{2s_{k-1}^Ty_{k-1}}
{ \|y_{k-1}\|^2-\frac{\|s_{k-1}\|^2}{\gamma^2}+
\sqrt{(\frac{\|s_{k-1}\|^2}{\gamma^2} -\|y_{k-1}\|^2)^2+\frac{4(s_{k-1}^Ty_{k-1})^2}{\gamma^2}}},\nonumber
\end{eqnarray}
where $\gamma>0$.

We can observe that both $s_{k-1}$ and $y_{k-1}$ are not exactly provided in the simplified quasi-Newton equations \eqref{qnew2} and \eqref{qnew1}.  This makes the STLS model particularly well-suited, as it accounts for errors in both the data matrix and the observation vector.

Some properties of steplength \eqref{uBB} are summarized in Proposition \ref{thm3}.
The proof of Proposition \ref{thm3}  is available in  Appendix \ref{AA}.
\begin{proposition} \label{thm3}
If $s^T_{k-1} y_{k-1}>0$, for any $\gamma>0$, it holds that
\begin{eqnarray*}
&&\alpha^{BB}_k(\gamma),~\alpha^{BB'}_k(\gamma)\in[\alpha^{BB2}_k,\alpha^{BB1}_k],\\
&&\lim_{\gamma\rightarrow +\infty}\alpha^{BB}_k(\gamma)=
\lim_{\gamma\rightarrow 0}\alpha^{BB'}_k(\gamma)=\alpha^{BB1}_k,\\
&&\lim_{\gamma\rightarrow 0}\alpha^{BB}_k(\gamma)=
\lim_{\gamma\rightarrow +\infty}\alpha^{BB'}_k(\gamma)=\alpha^{BB2}_k.
\end{eqnarray*}
Moreover,
$\alpha^{BB}_k(\gamma)$ and $\alpha^{BB'}_k(\gamma)$ are monotonically increasing and decreasing with respect to $\gamma$, respectively.

If we additionally assume that
$s_{k-1}$ and $y_{k-1}$ are linearly independent, $\alpha^{BB}_k(\gamma)$ and $\alpha^{BB'}_k(\gamma)$ are strictly monotonically increasing and strictly monotonically decreasing, respectively, with respect to the parameter $\gamma$.
\end{proposition}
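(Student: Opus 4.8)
The plan is to collapse all of the claims into the study of a single scalar function. Put $a=\|s_{k-1}\|^2$, $c=\|y_{k-1}\|^2$, $b=s_{k-1}^Ty_{k-1}>0$ and $t=1/\gamma^2\in(0,\infty)$. Cauchy--Schwarz gives $0<b^2\le ac$, with equality exactly when $s_{k-1}$ and $y_{k-1}$ are linearly dependent; moreover $\alpha^{BB1}_k=a/b$, $\alpha^{BB2}_k=b/c$, and $b^2\le ac$ forces $\alpha^{BB2}_k\le\alpha^{BB1}_k$. For $p,q>0$ set
\[
\Phi(t;p,q)=\frac{(p-qt)+\sqrt{(p-qt)^2+4b^2t}}{2b}.
\]
Then \eqref{uBB} says $\alpha^{BB}_k(\gamma)=\Phi(t;a,c)$, and since $(at-c)^2=(c-at)^2$, a one-line inspection of the displayed formula for $\alpha^{BB'}_k(\gamma)$ shows $\alpha^{BB'}_k(\gamma)=1/\Phi(t;c,a)$. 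So it suffices to analyze $t\mapsto\Phi(t;p,q)$ on $(0,\infty)$ under $b^2\le pq$, then specialize to $(p,q)=(a,c)$ and $(p,q)=(c,a)$.

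For the limits, as $t\to0^+$ we have $\sqrt{(p-qt)^2+4b^2t}\to p$, so $\Phi(t;p,q)\to p/b$. Rationalizing the numerator gives the equivalent form $\Phi(t;p,q)=2bt/\bigl(\sqrt{(p-qt)^2+4b^2t}+qt-p\bigr)$ (the denominator being positive since $\sqrt{(p-qt)^2+4b^2t}>|p-qt|$), from which $\Phi(t;p,q)\to b/q$ as $t\to+\infty$. Taking $(p,q)=(a,c)$ gives $\lim_{\gamma\to+\infty}\alpha^{BB}_k(\gamma)=a/b=\alpha^{BB1}_k$ and $\lim_{\gamma\to0}\alpha^{BB}_k(\gamma)=b/c=\alpha^{BB2}_k$; taking $(p,q)=(c,a)$ and reciprocals gives the two stated limits for $\alpha^{BB'}_k(\gamma)$.

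For monotonicity, write $D(t)=(p-qt)^2+4b^2t$, so $D'(t)=2(q^2t-pq+2b^2)$ and
\[
2b\,\frac{d}{dt}\Phi(t;p,q)=-q+\frac{q^2t-pq+2b^2}{\sqrt{D(t)}}.
\]
The claim is that this is $\le0$, i.e. $q^2t-pq+2b^2\le q\sqrt{D(t)}$. If the left-hand side is $\le0$ this is immediate; otherwise squaring and simplifying reduces it exactly to $q^2D(t)-(q^2t-pq+2b^2)^2=4b^2(pq-b^2)\ge0$, which is Cauchy--Schwarz. When $b^2<pq$ this quantity is strictly positive, and in the subcase $q^2t-pq+2b^2\le0$ the derivative is then bounded above by $-q/(2b)<0$; hence $\Phi(\cdot;p,q)$ is nonincreasing, and strictly decreasing when $b^2<pq$. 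Since $t=1/\gamma^2$ is a continuous strictly decreasing bijection of $(0,\infty)$ onto itself, $\alpha^{BB}_k(\gamma)=\Phi(1/\gamma^2;a,c)$ is nondecreasing (strictly increasing under linear independence) in $\gamma$ and $\alpha^{BB'}_k(\gamma)=1/\Phi(1/\gamma^2;c,a)$ is nonincreasing (strictly decreasing) in $\gamma$; combining continuity, monotonicity and the one-sided limits above places both in the closed interval $[\alpha^{BB2}_k,\alpha^{BB1}_k]$.

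All computations here are elementary; the only load-bearing step is the monotonicity inequality, where care is needed in the sign case-split used to clear the square root and in recognizing that the remaining polynomial inequality is precisely $b^2\le pq$, with its strict form tracking linear independence. The identity $\alpha^{BB'}_k(\gamma)=1/\Phi(1/\gamma^2;c,a)$ is only a short algebraic check, but it is what lets the whole argument be run once instead of twice.
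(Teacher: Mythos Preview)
Your proof is correct and follows essentially the same approach as the paper: reparametrize by $t=1/\gamma^2$, differentiate, and reduce the sign of the derivative to the Cauchy--Schwarz inequality $(s_{k-1}^Ty_{k-1})^2\le\|s_{k-1}\|^2\|y_{k-1}\|^2$, with the limits and interval bound falling out directly. Your packaging via the single function $\Phi(t;p,q)$ together with the identity $\alpha^{BB'}_k(\gamma)=1/\Phi(1/\gamma^2;c,a)$ is a nice touch that lets you handle both steplengths at once, whereas the paper treats $\alpha^{BB}_k$ and then simply remarks that $\alpha^{BB'}_k$ is similar.
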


If $\alpha^{BB1}_k=\alpha^{BB2}_k$, for any $\gamma>0$ and any $\tau_k\in(0,1)$, we have $\alpha^{BB}_k(\gamma)=\alpha_k(\tau_k)$.
If $\alpha^{BB1}_k>\alpha^{BB2}_k$,   Proposition \ref{thm3} implies that for any $\gamma>0$, we have $\alpha^{BB}_k(\gamma)\in(\alpha^{BB2}_k,\alpha^{BB1}_k)$.
Moreover, there exists a unique $\tau_k\in(0,1)$ such that the equation $$\alpha^{BB}_k(\gamma)
=\alpha_k(\tau_k)$$
holds.
Based on \eqref{fBB}, we can solve for $\tau_k$ by setting $\tau_k=\frac{\alpha^{BB}_k(\gamma)-\alpha^{BB2}_k}{\alpha^{BB1}_k-\alpha^{BB2}_k}$. The previous analysis can be summarized with the following corollary.
\begin{corollary} \label{cor}
Suppose that $s^T_{k-1} y_{k-1}>0$ and $\gamma>0$, we have 
\begin{equation}
\alpha^{BB}_k(\gamma)=
\tau_k\alpha_k^{BB1}+(1-\tau_k)\alpha_k^{BB2}=\alpha_k(\tau_k),\label{ubbR}
\end{equation}
where $\tau_k\in(0,1)$ is given by
\begin{eqnarray}\label{tau}
\tau_k=\begin{cases}
\tau,~\forall \tau\in(0,1), &\text{if }\alpha^{BB1}_k=\alpha^{BB2}_k,\\
\\
\frac{\alpha^{BB}_k(\gamma)-\alpha^{BB2}_k}{\alpha^{BB1}_k-\alpha^{BB2}_k},&\text{otherwise}.
\end{cases}
\end{eqnarray}
\end{corollary}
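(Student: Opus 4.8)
The plan is to reduce everything to Proposition~\ref{thm3} and to split along the two branches in \eqref{tau}. First I would record the preliminary observation that the hypothesis $s_{k-1}^Ty_{k-1}>0$ together with the Cauchy--Schwarz inequality gives
\[
\alpha_k^{BB1}=\frac{\|s_{k-1}\|^2}{s_{k-1}^Ty_{k-1}}\ \ge\ \frac{s_{k-1}^Ty_{k-1}}{\|y_{k-1}\|^2}=\alpha_k^{BB2},
\]
with equality if and only if $s_{k-1}$ and $y_{k-1}$ are linearly dependent. Hence the two cases distinguished in \eqref{tau} coincide exactly with the linearly dependent and linearly independent situations treated in Proposition~\ref{thm3}.

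In the case $\alpha_k^{BB1}=\alpha_k^{BB2}$, Proposition~\ref{thm3} forces $\alpha_k^{BB}(\gamma)\in[\alpha_k^{BB2},\alpha_k^{BB1}]$, which is a single point, so $\alpha_k^{BB}(\gamma)=\alpha_k^{BB1}=\alpha_k^{BB2}$; and for every $\tau\in(0,1)$ the convex combination $\tau\alpha_k^{BB1}+(1-\tau)\alpha_k^{BB2}$ collapses to this common value, which proves \eqref{ubbR} with any $\tau_k\in(0,1)$. In the case $\alpha_k^{BB1}>\alpha_k^{BB2}$ (equivalently, $s_{k-1}$ and $y_{k-1}$ linearly independent), Proposition~\ref{thm3} — strict monotonicity of $\alpha_k^{BB}(\gamma)$ in $\gamma$ together with the limits $\alpha_k^{BB2}$ and $\alpha_k^{BB1}$ as $\gamma\to 0$ and $\gamma\to+\infty$ — gives the strict containment $\alpha_k^{BB}(\gamma)\in(\alpha_k^{BB2},\alpha_k^{BB1})$. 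Then $\tau_k:=(\alpha_k^{BB}(\gamma)-\alpha_k^{BB2})/(\alpha_k^{BB1}-\alpha_k^{BB2})$ is well defined (nonzero denominator) and lies in $(0,1)$, and the one-line rearrangement $\tau_k\alpha_k^{BB1}+(1-\tau_k)\alpha_k^{BB2}=\alpha_k^{BB2}+\tau_k(\alpha_k^{BB1}-\alpha_k^{BB2})=\alpha_k^{BB}(\gamma)$ yields \eqref{ubbR}; uniqueness of $\tau_k$ is immediate since the affine map $\tau\mapsto\tau\alpha_k^{BB1}+(1-\tau)\alpha_k^{BB2}$ is injective.

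Since the analytic content — the location of $\alpha_k^{BB}(\gamma)$ relative to $\alpha_k^{BB1}$ and $\alpha_k^{BB2}$ — is already packaged in Proposition~\ref{thm3}, there is no substantive obstacle here. The only point requiring a little care is the degenerate case $\alpha_k^{BB1}=\alpha_k^{BB2}$, in which the closed-form expression for $\tau_k$ becomes $0/0$ and must be replaced by an arbitrary value in $(0,1)$; this is exactly why the first branch of \eqref{tau} is phrased as it is.
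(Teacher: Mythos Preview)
Your proposal is correct and follows essentially the same route as the paper: the paper's argument (given as explanatory text immediately preceding the corollary rather than as a separate proof) also splits on whether $\alpha_k^{BB1}=\alpha_k^{BB2}$, invokes Proposition~\ref{thm3} for the strict-inequality case to place $\alpha_k^{BB}(\gamma)$ strictly between the two BB steplengths, and then solves for $\tau_k$ from \eqref{fBB}. Your version adds a bit of useful detail the paper leaves implicit, namely the Cauchy--Schwarz justification that $\alpha_k^{BB1}\ge\alpha_k^{BB2}$ with equality precisely in the linearly dependent case, and the observation that this dichotomy matches the two regimes of Proposition~\ref{thm3}.
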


The special case with $\gamma=1$ corresponds to the total least squares problem:
\begin{equation}
\alpha^{BB}_k(1)=\frac{\|s_{k-1}\|^2-\|y_{k-1}\|^2+\sqrt{( \|  y_{k-1}\|^2-\| s_{k-1}\|^2)^2+4(s^T_{k-1} y_{k-1})^2}}{2s^T_{k-1} y_{k-1}}. \label{b3}
\end{equation}
The robustness of $\alpha^{BB}_k(1)$ can be observed from Fig.\ref{fig:1} and the following relation
\[
\alpha^{BB}_k(1)=\min\limits_\gamma\max\{\alpha^{BB}_k(\gamma),~\alpha^{BB'}_k(\gamma)\}.
\]
\begin{figure}[htbp]
	\centering	\includegraphics[width=0.7\textwidth,trim=10 10 10 10,clip]{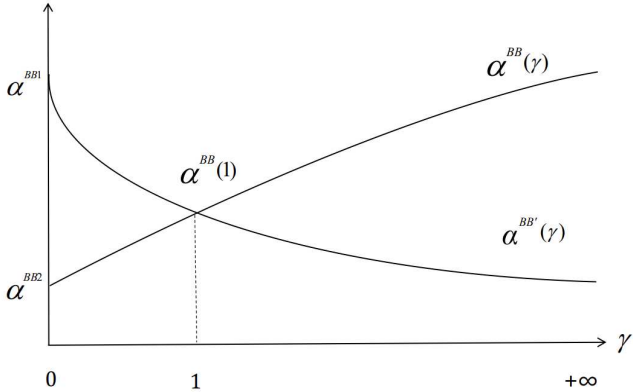}
	\caption{The relation of  $\alpha^{BB}(1)$, $\alpha^{BB}(\gamma)$ and $\alpha^{BB'}(\gamma)$.}\label{fig:1}
\end{figure}
Moreover, $\alpha^{BB}(1)$ keeps a good balance between $\alpha^{BB1}$ and $\alpha^{BB2}$. We list the  observations in the following.
\begin{proposition} \label{thm}
If $s^T_{k-1} y_{k-1}>0$, then we have
\[ \alpha^{BB}_k(1)=\frac{\alpha^{BB1}_k-\frac{1}{\alpha^{BB2}_k}
+\sqrt{\left(\frac{1}{\alpha^{BB2}_k}-\alpha^{BB1}_k\right)^2+4}}{2}.
\]	
Moreover, it holds that
\[\lim\limits_{\alpha^{BB1}_k\ge\alpha^{BB2}_k\to\infty} \frac{\alpha^{BB}_k(1)}{\alpha^{BB1}_k}=1,~\lim\limits_{\alpha^{BB2}_k\le\alpha^{BB1}_k\to 0} \frac{\alpha^{BB}_k(1)}{\alpha^{BB2}_k}=1.	
\]
\end{proposition}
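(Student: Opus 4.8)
The plan is to prove the stated formula by an elementary rewriting of the closed form \eqref{b3}, and then to read off the two asymptotic relations from a single quadratic identity satisfied by $\alpha^{BB}_k(1)$. Throughout I abbreviate $a:=\alpha^{BB1}_k=\|s_{k-1}\|^2/(s^T_{k-1}y_{k-1})$ and $b:=\alpha^{BB2}_k=(s^T_{k-1}y_{k-1})/\|y_{k-1}\|^2$. Since $s^T_{k-1}y_{k-1}>0$, the Cauchy--Schwarz inequality gives $a/b=\|s_{k-1}\|^2\|y_{k-1}\|^2/(s^T_{k-1}y_{k-1})^2\ge 1$, so $0<b\le a$; in particular the interval $[\,b,a\,]$ appearing in Proposition \ref{thm3} is well defined and $\alpha^{BB}_k(1)\in[\,b,a\,]$.

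First I would divide both the numerator and the denominator of \eqref{b3} by the positive number $s^T_{k-1}y_{k-1}$, absorbing that factor into the square root as its square; using $\|s_{k-1}\|^2/(s^T_{k-1}y_{k-1})=a$ and $\|y_{k-1}\|^2/(s^T_{k-1}y_{k-1})=1/b$ this gives exactly
\[
\alpha^{BB}_k(1)=\frac{\bigl(a-\tfrac1b\bigr)+\sqrt{\bigl(\tfrac1b-a\bigr)^2+4}}{2},
\]
which is the asserted identity. The same computation shows $\alpha^{BB}_k(1)$ is the unique positive root of $t^2+\bigl(\tfrac1b-a\bigr)t-1=0$ (the constant term $-1<0$ forces one positive and one negative root), hence it satisfies the key relation
\[
\alpha^{BB}_k(1)\Bigl(\alpha^{BB}_k(1)+\tfrac1b-a\Bigr)=1.
\]

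For the limit $b\to\infty$ with $a\ge b$ (so $a\to\infty$ as well), I would divide the key relation by $a\,\alpha^{BB}_k(1)>0$ to obtain
\[
\frac{\alpha^{BB}_k(1)}{a}=1-\frac{1}{ab}+\frac{1}{a\,\alpha^{BB}_k(1)};
\]
here $1/(ab)\to0$ because both factors diverge, and $0<1/(a\,\alpha^{BB}_k(1))\le 1/(ab)\to0$ since Proposition \ref{thm3} gives $\alpha^{BB}_k(1)\ge b$, so $\alpha^{BB}_k(1)/a\to1$. For the limit $a\to0$ with $b\le a$ (so $b\to0$), I would instead expand the key relation into
\[
\frac{\alpha^{BB}_k(1)}{b}=1+a\,\alpha^{BB}_k(1)-\bigl(\alpha^{BB}_k(1)\bigr)^2,
\]
and note that, since $0<\alpha^{BB}_k(1)\le a\to0$ by Proposition \ref{thm3}, the two rightmost terms are $O(a^2)\to0$, whence $\alpha^{BB}_k(1)/b\to1$.

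I do not expect a genuine obstacle here: once the quadratic relation is in hand the argument is routine. The only point that needs a little care is the reading of the one-sided joint limits — "$\alpha^{BB1}_k\ge\alpha^{BB2}_k\to\infty$" means $\alpha^{BB2}_k\to\infty$ while the admissible ordering $\alpha^{BB2}_k\le\alpha^{BB1}_k$ (which always holds by Cauchy--Schwarz) persists, and similarly for the other limit — together with the consequent need to bound the two ``cross'' terms uniformly, which is precisely where the enclosure $\alpha^{BB}_k(1)\in[\alpha^{BB2}_k,\alpha^{BB1}_k]$ of Proposition \ref{thm3} is invoked.
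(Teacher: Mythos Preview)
Your derivation of the closed-form identity is identical to the paper's: both divide \eqref{b3} through by the positive scalar $s^T_{k-1}y_{k-1}$ and recognise $\|s_{k-1}\|^2/(s^T_{k-1}y_{k-1})=\alpha^{BB1}_k$ and $\|y_{k-1}\|^2/(s^T_{k-1}y_{k-1})=1/\alpha^{BB2}_k$.

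For the two asymptotic relations the routes diverge. The paper simply says ``take the limit'' of the formula and records $\lim_{\alpha^{BB2}_k\to\infty}\alpha^{BB}_k(1)=\alpha^{BB1}_k$ and $\lim_{\alpha^{BB1}_k\to 0}\alpha^{BB}_k(1)=\alpha^{BB2}_k$, leaving the passage to the ratio statements implicit. You instead recast $\alpha^{BB}_k(1)$ as the positive root of $t^2+(1/b-a)t-1=0$ and read the limits off the algebraic identity $t\bigl(t+\tfrac1b-a\bigr)=1$, invoking the enclosure $\alpha^{BB}_k(1)\in[\alpha^{BB2}_k,\alpha^{BB1}_k]$ from Proposition~\ref{thm3} to kill the cross terms. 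This is a genuinely cleaner argument: it handles the \emph{joint} limits (both $a$ and $b$ moving) explicitly and supplies the uniform bounds that the paper's one-liner omits; read literally with one variable held fixed, the paper's intermediate limits would not even give the stated ratio conclusions. The trade-off is that your approach leans on Proposition~\ref{thm3}, whereas a direct computation from the explicit formula (dividing by $a$, respectively multiplying by $1/b$, and using $\sqrt{u^2+4}=|u|\sqrt{1+4/u^2}$) would be self-contained. Both are elementary; yours is the more carefully justified.
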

The proof of Proposition \ref{thm}  is provided in Appendix \ref{AB}.

The steplength $\alpha^{BB}_k(1)$ \eqref{b3} was first introduced in \cite{li2021new}, which was the initial version of this paper. Recently, Ferrandi and Hochstenbach  \cite{ferrandi2023homogeneous} re-derived \eqref{b3} using the homogeneous Rayleigh quotient.
Specifically, they considered the minimization of the homogeneous residual:
\begin{eqnarray}\label{hrq}
\min\limits_{\alpha_1^2+\alpha_2^2=1}\|\alpha_1\textbf{u}-\alpha_2A\textbf{u}\|,
\end{eqnarray}
where $A\in\mathbb{R}^{n\times n}$ is a symmetric matrix and $\textbf{u}\in\mathbb{R}^n$ is an approximate eigenvector. The homogeneous Rayleigh quotient is defined as $\alpha=\alpha_1/\alpha_2$.
By substituting $\textbf{u}=s_{k-1}$ and $A=H_k$ with $H_k:=\int^1_0\nabla^2f((1-t)x_{k-1}+tx_k)dt$ in \eqref{hrq}, we obtain
\begin{equation}\label{hrq2}
\min\limits_{\alpha_1^2+\alpha_2^2=1}\|\alpha_1s_{k-1}-\alpha_2y_{k-1}\|,
\end{equation}
and show that its homogeneous Rayleigh quotient is equivalent to  \eqref{b3}. This alternative derivation provides a new perspective on the derivation of \eqref{b3} and clarifies its relationship with the homogeneous Rayleigh quotient.

\section{Convergence analysis}
We analyze the convergence of the BB method with the new steplength \eqref{uBB} for strictly convex quadratic minimization. Extending to minimize nonquadratic problem requires to incorporate a non-monotonic line search technique.
\subsection{Strictly convex quadratic minimization}
We consider minimizing the $n$-dimensional strictly convex quadratic function:
\begin{equation*}
f(x)=\frac{1}{2}x^TAx-b^Tx,
\end{equation*}
where $A\in\mathbb{R}^{n\times n}$ is a symmetric positive definite matrix and $b\in \mathbb{R}^n$.
For this problem, the BB method with the steplength \eqref{fBB} converges R-superlinearly when $n=2$ and R-linearly for any finite $n$ \cite{dai2019family}.

For the case $n=2$, without loss of generality,  we assume that
\begin{eqnarray*}
A=\begin{pmatrix}
1&0\\
0&\lambda
\end{pmatrix},~b=0,
\end{eqnarray*}
where $\lambda> 1$. Then, we define
\begin{eqnarray*}
q_k=\left(\frac{g^{(1)}_k}{g^{(2)}_k}\right)^2,
\theta=\frac{1\pm\sqrt{7}i}{2},~M_k=\log q_k,~\xi_k=M_k+(\theta-1)M_{k-1},
\end{eqnarray*}
where $i$ is the imaginary unit, $g^{(1)}_k$ and $g^{(2)}_k$ are the first and second components of $g_k$, respectively.

\begin{theorem}[\cite{dai2019family}]\label{thm4}
Let $n=2$. Suppose that $\tau_k\in(0,1)$, $g^{(1)}_j\neq 0$ and $g^{(2)}_j\neq 0$ where $j=1,~2$, and $\vert\xi_2\vert\geq 8\log\lambda$ hold, the sequence $\{\|g_k\|\}$ generated by the BB method with the steplength \eqref{fBB} converges to zero R-superlinearly.
\end{theorem}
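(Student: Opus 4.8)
The plan is to reduce the two–dimensional dynamics to a scalar recursion, diagonalize it, and feed the resulting geometric blow‑up into a convergence estimate. Since $A=\mathrm{diag}(1,\lambda)$ and $b=0$, for a quadratic $g_{k+1}=(I-\alpha_kA)g_k$, so the components decouple: $g^{(1)}_{k+1}=(1-\alpha_k)g^{(1)}_k$ and $g^{(2)}_{k+1}=(1-\lambda\alpha_k)g^{(2)}_k$. First I would record that $\alpha^{BB1}_k=\tfrac{q_{k-1}+1}{q_{k-1}+\lambda}<1$ and $\alpha^{BB2}_k=\tfrac{q_{k-1}+\lambda}{q_{k-1}+\lambda^{2}}>\tfrac1\lambda$ (both since $q_{k-1}>0$), so every $\alpha_k=\alpha_k(\tau_k)$, a convex combination of these, lies in $(\tfrac1\lambda,1)$; hence $1-\alpha_k$ and $1-\lambda\alpha_k$ never vanish, the nondegeneracy $g^{(1)}_j,g^{(2)}_j\neq0$ propagates from $j=1,2$ to all $j$, and $q_k:=(g^{(1)}_k/g^{(2)}_k)^{2}$ stays well defined. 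Then, using that $\alpha\mapsto\tfrac{1-\alpha}{1-\lambda\alpha}$ is increasing on $(\tfrac1\lambda,1)$ and equals $-\tfrac1{q_{k-1}}$ at $\alpha^{BB1}_k$ and $-\tfrac{\lambda}{q_{k-1}}$ at $\alpha^{BB2}_k$, together with $q_{k+1}=\bigl(\tfrac{1-\alpha_k}{1-\lambda\alpha_k}\bigr)^{2}q_k$, I obtain
\[
\frac{q_k}{q_{k-1}^{2}}\ \le\ q_{k+1}\ \le\ \frac{\lambda^{2}q_k}{q_{k-1}^{2}}.
\]

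Taking logarithms with $M_k=\log q_k$ turns this into $M_{k+1}=M_k-2M_{k-1}+\varepsilon_k$, $\varepsilon_k\in[0,2\log\lambda]$. The homogeneous recursion has characteristic roots $\theta,\bar\theta=\tfrac{1\pm\sqrt7\,i}{2}$ with $\theta^{2}=\theta-2$, so $\xi_k=M_k+(\theta-1)M_{k-1}$ obeys the perturbed first–order relation $\xi_k=\theta\xi_{k-1}+\varepsilon_{k-1}$. Since $|\theta|=\sqrt2$ and $|\varepsilon_{k-1}|\le2\log\lambda$, the hypothesis $|\xi_2|\ge8\log\lambda$ keeps the perturbation $\le\tfrac14|\xi_{k-1}|$ at every step, so a one–line induction gives $|\xi_k|\ge8\log\lambda$ and $|\xi_k|\ge(\sqrt2-\tfrac14)|\xi_{k-1}|$ for all $k\ge2$; thus $|\xi_k|\to\infty$ geometrically. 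Because $\mathrm{Im}\,\xi_k=\tfrac{\sqrt7}{2}M_{k-1}$ and $|\xi_k|\le\sqrt2\,(|M_k|+|M_{k-1}|)$, it follows that $|M_k|+|M_{k-1}|\to\infty$ geometrically, and since $\theta\notin\mathbb{R}$ the quantities $M_k=\log q_k$ change sign: the two gradient components become wildly unbalanced and which one dominates alternates.

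The decisive step is turning this into $\|g_k\|\to0$ R-superlinearly, and this is where I expect the real work. On one hand $\|g_{k+1}\|\le\|I-\alpha_kA\|_2\,\|g_k\|\le\max(1,\lambda-1)\,\|g_k\|$, so the gradient norm inflates by at most a fixed factor per step. On the other hand, call $k$ a \emph{good step} when $q_{k-1}$ and $q_k$ lie on the same side of $1$ and far from it: if both are large then $|1-\alpha_k|\le\tfrac{\lambda(\lambda-1)}{q_{k-1}}$ while it is the dominant component $g^{(1)}$ that is contracted, and a short estimate gives $\|g_{k+1}\|/\|g_k\|\le(\lambda-1)\bigl(\lambda/q_{k-1}+q_k^{-1/2}\bigr)$ (symmetrically $\le(\lambda-1)\bigl(q_k^{1/2}+q_{k-1}\bigr)$ if both are small). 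From the geometric blow‑up of $|M_k|+|M_{k-1}|$, together with a check that with the specific $\theta$ the ``large/small'' labels of consecutive $q$'s cannot form an alternating pattern (so good steps recur with a uniformly bounded gap and their contraction factors tend to $0$), a standard averaging argument on $\log\|g_k\|$ then yields $\tfrac1k\log\|g_k\|\to-\infty$, i.e.\ R-superlinear convergence of $\{\|g_k\|\}$ to $0$. Equivalently one can telescope the componentwise recursions to $\log\|g_k\|^{2}=\mathrm{const}-\sum_{j=1}^{k-2}|M_j|+\tfrac12|M_k|+O(k)$ and argue that the geometrically growing sum dominates.

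The main obstacle is exactly this last paragraph: the blow‑up of $\xi_k$ controls only the \emph{imbalance} between the two gradient components, so one must verify that the rotation angle $\arg\theta$ does not conspire to make good steps rare or weak, and that the $O(k)$ of accumulated per‑step slack never overtakes the geometric gain. The remaining ingredients — the quadratic reduction, the $\xi_k$ recursion, the two‑sided bounds on $1-\alpha_k$ and $1-\lambda\alpha_k$, and the boundary bookkeeping for the first indices and the degenerate case $\alpha^{BB1}_k=\alpha^{BB2}_k$ — are routine.
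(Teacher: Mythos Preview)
This theorem is not proved in the paper: it is quoted verbatim from \cite{dai2019family} and stated without proof, serving only as an input to Theorem~\ref{thm6}. There is therefore no ``paper's own proof'' to compare your proposal against.

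That said, your outline tracks exactly the machinery that the paper imports from \cite{dai2019family} (the quantities $q_k$, $M_k$, $\theta$, $\xi_k$ are introduced in the paper precisely because they come from that reference's proof), and the route you sketch---reduce to the scalar recursion for $q_k$, linearize to $M_{k+1}=M_k-2M_{k-1}+\varepsilon_k$, diagonalize via $\xi_k$, use $|\theta|=\sqrt2$ and the hypothesis $|\xi_2|\ge 8\log\lambda$ to get geometric blow-up, then translate into R-superlinear decay of $\|g_k\|$---is the standard one in that line of work. Your own flagged obstacle, the conversion of $|\xi_k|\to\infty$ into $\|g_k\|\to0$ R-superlinearly, is indeed the delicate part in the original argument; the rest of your sketch is sound.
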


For the general strictly convex quadratic minimization, we assume
\begin{eqnarray*}
A={\rm diag}\{\lambda_1,\lambda_2,\cdots,\lambda_n\},
\end{eqnarray*}
where $1=\lambda_1\leq\lambda_2\leq\cdots\leq\lambda_n$.

\begin{theorem}[\cite{dai2019family}]\label{thm5}
For the general strictly convex quadratic minimization problem, either the sequence $\{\|g_k\|\}$ generated by the BB method with the steplength \eqref{fBB} converges to zero R-linearly or $g_k=0$ for some finite $k$.
\end{theorem}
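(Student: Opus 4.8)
The plan is to follow the R-linear convergence analysis that Dai and Liao developed for the pure BB method, in the form extended to the convex-combination family in \cite{dai2019family}. After the translation $x\mapsto x-A^{-1}b$ we may take $b=0$, so $g_{k}=Ax_{k}$ and the iteration decouples componentwise,
\begin{equation*}
g_{k+1}^{(i)}=(1-\alpha_{k}\lambda_{i})\,g_{k}^{(i)},\qquad i=1,\dots,n ,
\end{equation*}
where $\alpha_{k}=\alpha_{k}(\tau_{k})$. For a quadratic one has $s_{k-1}=-\alpha_{k-1}g_{k-1}$ and $y_{k-1}=As_{k-1}$, so that $\alpha_{k}^{BB1}=\|g_{k-1}\|^{2}/(g_{k-1}^{T}Ag_{k-1})$ and $\alpha_{k}^{BB2}=(g_{k-1}^{T}Ag_{k-1})/(g_{k-1}^{T}A^{2}g_{k-1})$ are Rayleigh-quotient expressions in $[1/\lambda_{n},1/\lambda_{1}]$; being a convex combination of these, $\alpha_{k}$ lies in $[1/\lambda_{n},1/\lambda_{1}]$ as well (note $s_{k-1}^{T}y_{k-1}=\alpha_{k-1}^{2}\,g_{k-1}^{T}Ag_{k-1}>0$ whenever $g_{k-1}\neq 0$). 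If $g_{k}=0$ for some finite $k$ the second alternative of the statement holds and we are done, so assume $g_{k}\neq 0$ for every $k$.

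The first easy observation is the contraction in the slowest direction: since $\alpha_{k}\lambda_{1}=\alpha_{k}\in[1/\lambda_{n},1]$ we get $|1-\alpha_{k}\lambda_{1}|\le 1-\lambda_{1}/\lambda_{n}<1$, so $|g_{k}^{(1)}|$ tends to zero geometrically. The substantive part is the window estimate: there exist an integer $M$ and a constant $c\in(0,1)$, depending only on $n$ and on $\lambda_{n}/\lambda_{1}$, such that $\|g_{k+M}\|\le c\,\|g_{k}\|$ for all $k$; R-linear convergence of $\{\|g_{k}\|\}$ (hence also of $f(x_{k})-f^{\ast}$) follows at once by iterating this inequality. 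I would prove the window estimate by induction on the dimension $n$. For $n=1$ the steplength equals $1/\lambda_{1}$ from the first update on, forcing $g_{k}=0$. For the inductive step, one passes to the reduced vector $\tilde g_{k}=(g_{k}^{(2)},\dots,g_{k}^{(n)})$ and the reduced matrix $\tilde A={\rm diag}(\lambda_{2},\dots,\lambda_{n})$ (rescaled so that its smallest eigenvalue is $1$), for which $\tilde g_{k+1}=(I-\alpha_{k}\tilde A)\tilde g_{k}$; since $g_{k-1}^{(1)}\to 0$, the steplengths $\alpha_{k}$ are, up to a controllable discrepancy, the BB-family steplengths for the $(n-1)$-dimensional problem $\tilde A$. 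Combined with the self-correcting mechanism behind BB steplengths — whenever a fixed fraction of $\|g_{k}\|^{2}$ sits in the top spectral band, the weighted average defining $1/\alpha_{k+1}$ is pulled toward $\lambda_{n}$, which makes $|1-\alpha_{k+1}\lambda_{n}|$ small and contracts that band — an amortized count over a window of fixed length yields the net decay, and the induction hypothesis closes the argument.

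The main obstacle is precisely the window estimate with constants uniform in $k$. The difficulty is that the BB-type steplength is adaptive and global: $\alpha_{k}$ is governed by the full vector $g_{k-1}$ and may be as large as $1/\lambda_{1}$, which is a potentially over-large (non-contracting) step in the $\lambda_{2},\dots,\lambda_{n}$ directions, so the reduced iteration is not literally a BB iteration for $\tilde A$ and one must quantify the coupling term (using the geometric decay of $g^{(1)}$) and carry it faithfully through the induction. In addition, an individual component can be transiently amplified by a ``bad'' steplength, so the contraction appears only on average over a window, and turning the qualitative self-correcting mechanism into a quantitative amortized bound is the heart of the proof. Finally, the degenerate configurations in which some components vanish exactly — so that the iterates collapse into a proper $A$-invariant subspace and the effective dimension drops — must be isolated; this is exactly the situation producing the ``$g_{k}=0$ for some finite $k$'' clause in the statement.
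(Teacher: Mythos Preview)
This theorem is quoted from \cite{dai2019family} and is not proved in the present paper; the authors simply cite the result and use it downstream. So there is no proof here to compare against beyond the attribution.

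Your outline is the standard Dai--Liao route that \cite{dai2019family} adapts: diagonalise, note that $\alpha_k(\tau_k)\in[1/\lambda_n,1/\lambda_1]$ because it is a convex combination of two Rayleigh quotients of $A$, deduce geometric decay of $g_k^{(1)}$, and then obtain a window estimate $\|g_{k+M}\|\le c\,\|g_k\|$ with $M$ and $c$ depending only on $n$ and $\lambda_n/\lambda_1$. That is the right scaffold, and your observation that $s_{k-1}^{T}y_{k-1}>0$ keeps the steplength well-defined is correct.

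What you have written, however, is explicitly a plan rather than a proof, and the genuine work --- the amortised counting that turns the ``self-correcting mechanism'' into a uniform window contraction --- is only described, not carried out. One point to flag: the inductive reduction you sketch is not how the Dai--Liao argument actually proceeds. Because $\alpha_k$ is computed from the \emph{full} $g_{k-1}$, the reduced iteration on $\tilde g_k$ is not an $(n{-}1)$-dimensional BB-family iteration even asymptotically in the sense needed for a clean induction; the decay of $g_k^{(1)}$ does not by itself force $\alpha_k$ close to a Rayleigh quotient of $\tilde A$ uniformly in $k$. The published proof instead works directly with all $n$ components simultaneously and shows, by a combinatorial argument, that within any window of fixed length enough steps contract each spectral band. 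If you intend to write out a self-contained proof, you should either follow that combinatorial line or be prepared to do substantially more work to make the inductive coupling quantitative.
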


As a direct corollary of Theorems \ref{thm4} and \ref{thm5}, we can establish convergence properties of the BB method with the new steplength $\alpha^{BB}_k(\gamma)$ \eqref{uBB}. It is worth noting that the convergence analysis of $\alpha^{BB'}_k(\gamma)$ is similar and will be omitted here.

\begin{theorem}\label{thm6}
Let $\{\|g_k\|\}$ be the sequence generated by the BB method with the steplength \eqref{uBB}, where $\gamma>0$.
For the general strictly convex quadratic minimization problem,  either the sequence $\{\|g_k\|\}$ converges to zero R-linearly, or $g_k=0$ for some finite $k$. For the particular planar case, under the same assumptions as those made in Theorem \ref{thm4}, the sequence $\{\|g_k\|\}$ converges to zero R-superlinearly.
\end{theorem}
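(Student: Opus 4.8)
The plan is to show that Theorem~\ref{thm6} is essentially a corollary of Theorems~\ref{thm4} and~\ref{thm5}, the bridge being Corollary~\ref{cor}, which already writes $\alpha^{BB}_k(\gamma)$ as a member of the family~\eqref{fBB}. First I would exploit the quadratic structure: for $f(x)=\frac{1}{2}x^{T}Ax-b^{T}x$ one has $y_{k-1}=g_k-g_{k-1}=A(x_k-x_{k-1})=As_{k-1}$, hence $s_{k-1}^{T}y_{k-1}=s_{k-1}^{T}As_{k-1}>0$ whenever $s_{k-1}\neq 0$, i.e.\ whenever $g_{k-1}\neq 0$. So, as long as the iteration has not already produced $g_k=0$, the hypothesis $s_{k-1}^{T}y_{k-1}>0$ of Corollary~\ref{cor} holds at every step, and therefore $\alpha^{BB}_k(\gamma)=\tau_k\alpha_k^{BB1}+(1-\tau_k)\alpha_k^{BB2}=\alpha_k(\tau_k)$ with $\tau_k\in(0,1)$ given by~\eqref{tau}. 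In words, the BB method driven by~\eqref{uBB} is a particular realisation of the family~\eqref{fBB}, and the task reduces to checking that the hypotheses of Theorems~\ref{thm4} and~\ref{thm5} are satisfied by the induced $\tau_k$.

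The R-linear statement for general $n$ is then immediate: with $A={\rm diag}\{\lambda_1,\dots,\lambda_n\}$ normalised as in Theorem~\ref{thm5} and with $\tau_k\in(0,1)\subset[0,1]$, Theorem~\ref{thm5} applies verbatim, so either $g_k=0$ for some finite $k$, or $\{\|g_k\|\}\to 0$ R-linearly.

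For the planar case I would verify the remaining hypotheses of Theorem~\ref{thm4}, namely that $\tau_k\in(0,1)$ holds and that $g_j^{(1)},g_j^{(2)}\neq 0$ propagates from $j=1,2$ to all $k$ (the condition $|\xi_2|\ge 8\log\lambda$ being assumed). With $A={\rm diag}\{1,\lambda\}$, $\lambda>1$, and using $g_{k}=(I-\alpha_{k-1}A)g_{k-1}$, i.e.\ $g_k^{(i)}=(1-\lambda_i\alpha_{k-1})g_{k-1}^{(i)}$, the crucial fact is that whenever $g_{k-1}$ has two nonzero components the closed forms $\alpha_k^{BB1}=\|g_{k-1}\|^2/(g_{k-1}^{T}Ag_{k-1})$ and $\alpha_k^{BB2}=(g_{k-1}^{T}Ag_{k-1})/(g_{k-1}^{T}A^2g_{k-1})$ lie strictly inside $(1/\lambda,1)$ (a one-line computation, using $\lambda>1$ and Cauchy--Schwarz for $\alpha_k^{BB2}\le\alpha_k^{BB1}$), so that by Proposition~\ref{thm3} also $\alpha^{BB}_k(\gamma)\in[\alpha_k^{BB2},\alpha_k^{BB1}]\subset(1/\lambda,1)$; in particular $\alpha_{k-1}\neq 1$ and $\alpha_{k-1}\neq 1/\lambda$. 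A short induction on $k$, started from the assumed nonvanishing of the components of $g_1$ and $g_2$, then yields $g_k^{(1)},g_k^{(2)}\neq 0$ for all $k$, whence $s_{k-1}$ is never an eigenvector of $A$, so $\alpha_k^{BB1}>\alpha_k^{BB2}$ and, by the second branch of~\eqref{tau}, $\tau_k=(\alpha^{BB}_k(\gamma)-\alpha_k^{BB2})/(\alpha_k^{BB1}-\alpha_k^{BB2})\in(0,1)$. With all hypotheses checked, Theorem~\ref{thm4} gives the R-superlinear convergence of $\{\|g_k\|\}$.

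I expect the only genuine work to be this last bookkeeping: showing that both components of $g_k$ remain nonzero along the entire sequence, so that $\tau_k$ is well defined and strictly in $(0,1)$ and no denominator in~\eqref{uBB} degenerates. Everything else is a direct appeal to Corollary~\ref{cor} and to Theorems~\ref{thm4} and~\ref{thm5}, with no new convergence analysis required.
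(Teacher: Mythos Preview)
Your proposal is correct and follows essentially the same route as the paper: use the quadratic identity $y_{k-1}=As_{k-1}$ to get $s_{k-1}^{T}y_{k-1}>0$ whenever $g_{k-1}\neq 0$, invoke Corollary~\ref{cor} to identify $\alpha^{BB}_k(\gamma)$ with a member $\alpha_k(\tau_k)$, $\tau_k\in(0,1)$, of the family~\eqref{fBB}, and then appeal to Theorems~\ref{thm4} and~\ref{thm5}. The paper's proof is even terser: in the planar case it simply cites \cite{dai2019family} for the fact that $g_{k-1}\neq 0$ for every $k$ under the hypotheses of Theorem~\ref{thm4}, and relies on Corollary~\ref{cor} (which already covers both the case $\alpha_k^{BB1}=\alpha_k^{BB2}$ and the strict case) to produce $\tau_k\in(0,1)$; your extra induction showing that both components of $g_k$ stay nonzero and that $\alpha^{BB}_k(\gamma)\in(1/\lambda,1)$ is sound but not needed, since Theorem~\ref{thm4} only assumes nonvanishing components for $j=1,2$ and handles the propagation internally.
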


The proof of Theorem \ref{thm6} is available in Appendix \ref{AC}.

\subsection{Nonquadratic minimization}
For nonquadratic minimization problem, we combine the steplength \eqref{uBB} with the non-monotonic line search technique \cite{raydan1997barzilai}:
\begin{equation}\label{nls}
f(x_{k+1})\leq\max\limits_{0\leq j\leq M}f(x_{k-j})+\beta g_k^T(x_{k+1}-x_k),
\end{equation}
where $M$ is a nonnegative integer and $\beta>0$. The detailed procedure is presented in Algorithm \ref{algo1}.
\begin{algorithm}
\caption{}\label{algo1}
\begin{algorithmic}[1]
\Require $x_0,~\alpha_0$, integer $M\geq 0,~\beta\in(0,1),~\delta>0,~0<\sigma_1<\sigma_2<1,~0<\eta<1$, $\epsilon>0$ is a precision parameter, and $k=0$.
\Ensure $x_{k+1}$
\State \textbf{if} $\|g_k\|<\epsilon$, stop.
\State \textbf{if} $\alpha_k\leq\eta$ or $\alpha_k\geq1/\eta$ \textbf{then} set $\alpha_k=\delta$.
\If{$f(x_{k}-\alpha_k g_k)\leq\max\limits_{0\leq j\leq \min(k,M)}f(x_{k-j})-\beta\alpha_k g_k^Tg_k$}
        \State $x_{k+1}=x_k-\alpha_kg_k$. 
        \State Calculate $\alpha_{k+1}$ as defined in \eqref{uBB}. Set $k=k+1$ and goto Step 1.
\Else
        \State For $\sigma\in[\sigma_1,\sigma_2]$, set $\alpha_k=\sigma\alpha_k$ and goto Step 3.
\EndIf
\end{algorithmic}
\end{algorithm}

Directly following  \cite[Theorem 2.1]{raydan1997barzilai}, we can establish the global convergence of Algorithm \ref{algo1}.

\begin{theorem}\label{thm8}
Assume that $\Omega_0=\{x:f(x)\leq f(x_0)\}$ is a bounded set. Let $f:R^n\rightarrow R$ be continuously differentiable in  $\Omega_0$. Let $\{x_k\}$ be the sequence generated by Algorithm \ref{algo1}. Then either $g_k=0$ for some finite $k$, or the following properties hold:

{\rm (\romannumeral1)} $\lim\limits_{k\rightarrow\infty}\|g_k\|=0$;

{\rm(\romannumeral2)} no limit point of $\{x_k\}$ is a local maximum of $f$;

{\rm(\romannumeral3)} if the number of stationary points of $f$ in $\Omega_0$ is finite, then the sequence $\{x_k\}$ converges.

\end{theorem}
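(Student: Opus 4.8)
The plan is to recognize Algorithm~\ref{algo1} as an instance of Raydan's globalized Barzilai--Borwein scheme and to check that its hypotheses are satisfied, so that \cite[Theorem 2.1]{raydan1997barzilai} applies essentially verbatim. The decisive point is that the convergence analysis of \cite{raydan1997barzilai} never uses the particular closed form of the trial steplength: it only requires that the steplength actually fed into the non-monotone line search at each iteration lie in a fixed compact subinterval of $(0,+\infty)$. This is exactly what the safeguarding in Step~2 enforces --- whenever $\alpha_k\notin(\eta,1/\eta)$ it is reset to $\delta$, so the trial step always belongs to $[\min\{\eta,\delta\},\max\{1/\eta,\delta\}]$. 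In particular, this is also where the cases $s_{k-1}^Ty_{k-1}\le 0$ are disposed of: the numerator in \eqref{uBB} is always nonnegative, because the square root dominates $\big|\,\|s_{k-1}\|^2-\|y_{k-1}\|^2/\gamma^2\,\big|$, so the sign of $\alpha^{BB}_k(\gamma)$ coincides with that of $s_{k-1}^Ty_{k-1}$; hence a non-positive or ill-defined output of \eqref{uBB} lies outside $(\eta,1/\eta)$ and is replaced by $\delta$. Consequently the precise formula \eqref{uBB}, and the value of $\gamma$, are immaterial for global convergence.

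First I would verify that the inner loop (Steps~3--8) terminates in finitely many backtracks. Since the algorithm has not stopped, $g_k\neq 0$, and for $\alpha_k>0$ the subtracted term $\beta\alpha_k g_k^Tg_k=\beta\alpha_k\|g_k\|^2$ is strictly positive. By continuity of $f$ on $\Omega_0$, for all sufficiently small $\alpha_k>0$ one has $f(x_k-\alpha_k g_k)\le f(x_k)-\beta\alpha_k\|g_k\|^2\le\max_{0\le j\le\min(k,M)}f(x_{k-j})-\beta\alpha_k\|g_k\|^2$, i.e. the acceptance test of Step~3 holds. Because each reduction multiplies $\alpha_k$ by a factor in $[\sigma_1,\sigma_2]\subset(0,1)$, after finitely many reductions the test is met, $x_{k+1}$ is well defined, and $f(x_{k+1})<\max_{0\le j\le\min(k,M)}f(x_{k-j})\le f(x_0)$ by induction; hence $\{x_k\}\subset\Omega_0$, which is bounded by assumption.

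With the trial steplengths confined to a fixed compact interval of positive reals, the line search \eqref{nls} well defined at every step, $f\in C^1$ and $\Omega_0$ bounded, Algorithm~\ref{algo1} meets all the assumptions of \cite[Theorem 2.1]{raydan1997barzilai}; invoking that result gives conclusions (i), (ii) and (iii). I do not expect a substantive obstacle: the work is essentially the translation of Algorithm~\ref{algo1} into Raydan's notation, together with the observation that Step~2 supplies the uniform two-sided bound on the trial step (Proposition~\ref{thm3} only gives $\alpha^{BB}_k(\gamma)\in[\alpha^{BB2}_k,\alpha^{BB1}_k]$ when $s_{k-1}^Ty_{k-1}>0$, and those endpoints need not themselves be bounded away from $0$ or $+\infty$, which is precisely why the safeguard is needed). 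The one point requiring a little care is confirming that an ill-posed or negative value of \eqref{uBB} can never slip past the guard in Step~2, which is exactly the sign analysis indicated above.
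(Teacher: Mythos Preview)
Your proposal is correct and follows exactly the approach the paper takes: the paper simply states that the proof ``is similar to that of \cite[Theorem 2.1]{raydan1997barzilai} and is therefore omitted here,'' so your reduction to Raydan's result is precisely what is intended. You have in fact supplied more detail than the paper does --- in particular the observation that Step~2's safeguard, rather than Proposition~\ref{thm3}, is what provides the uniform two-sided bound on the trial step and handles the case $s_{k-1}^Ty_{k-1}\le 0$ --- but the underlying route is the same.
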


The proof of Theorem \ref{thm8} is similar to that of \cite[Theorem 2.1]{raydan1997barzilai} and is therefore omitted here.

It is worth noting that the steplength \eqref{uBB} can be combined with other well-known non-monotonic line search techniques, such as those proposed in  \cite{zhang2004nonmonotone}.

\section{Numerical experiments}
We report the numerical performance of our new steplength $\alpha^{BB}_k(\gamma)$ \eqref{uBB} with a carefully-selected constant $\gamma$. In the experiments involving methods with non-monotonic line search, we uniformly set $M=10$, $\beta=0.1$, $\eta=0.001$, $\delta=0.1$ and $\sigma=0.8$.
All experiments were implemented in MATLAB R2017a. All the runs were carried out on a PC with an Intel(R) Core(TM) i7-8665U CPU @1.90GHz 2.11 GHz and 16 GB of RAM.

\subsection{Toward the best constant choice of $\gamma$}
\label{subsec}
In this subsection, we evaluated the performance of various $\gamma$ values using the performance profile \cite{dolan2002benchmarking}.

Consider the following quadratic function taken from  \cite{dai2019family}:
\begin{eqnarray}\label{pro2}
f(x)=\frac{1}{2}x^TQ\cdot {\rm diag}(v_1,\cdots,v_n) \cdot Q^Tx-b^Tx,
\end{eqnarray}
where $b$ is uniformly and randomly generated from $[-10,10]^n$,
\begin{eqnarray*}
Q=(I-2\omega_3\omega_3^T)(I-2\omega_2\omega_2^T)(I-2\omega_1\omega_1^T)
\end{eqnarray*}
with $\omega_1$, $\omega_2$ and $\omega_3$ being unit random vectors, $v_1=1$, $v_n=\kappa$ and $v_j~(j=2,\cdots,n-1)$ is randomly generated from $(1,\kappa)$.

For each $n\in\{100,~1000\}$,
we generated ten instances with different settings $\kappa=10^4,10^5,10^6$. The starting point was set $x_0=(1,1,\cdots,1)^T$. The stop criterion is either that the gradient at the $k$-th iteration satisfies that  $\|g_k\|\le\epsilon\|g_0\|$
with $\epsilon:10^{-6},10^{-9},10^{-12}$  or  the number of iterations exceeds $20000$.

\begin{figure}[htbp]
	\centering	\includegraphics[width=9cm,height=7cm]{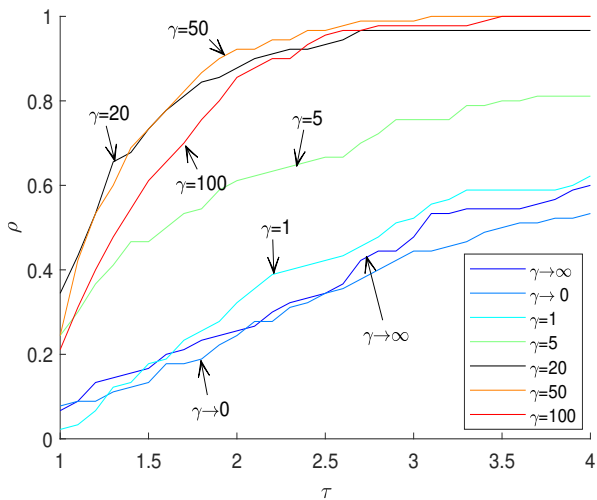}
	\caption{Performance profile of $\alpha^{BB}(\gamma)$ with different $\gamma$ for  $n=100$.}\label{fig:2}
\end{figure}
\begin{figure}[htbp]
	\centering	\includegraphics[width=9cm,height=7cm]{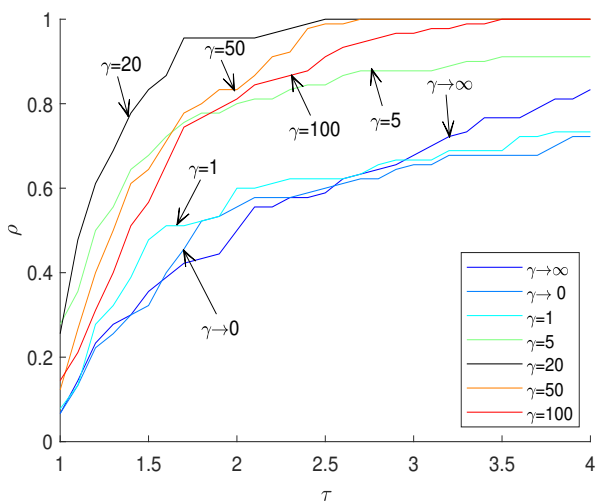}
	\caption{Performance profile of $\alpha^{BB}(\gamma)$ with different $\gamma$ for  $n=1000$.}\label{fig:3}
\end{figure}

Figures \ref{fig:2} and \ref{fig:3} compare the performance of the new steplengths $\alpha^{BB}_k(\gamma)$ with different $\gamma$, where a higher curve represents better performance.
When $n=100$, the best choices of $\gamma$ appear to be $20$ or $50$. The best performance is observed when $n=1000$, $\gamma=20$. In both cases, the steplength \eqref{b3} with $\gamma=1$  is also competitive. The worst two cases are $\gamma\rightarrow0$ and $\gamma\rightarrow +\infty$, which correspond to
 $\alpha^{BB1}_k$ and $\alpha^{BB2}_k$ (see Proposition \ref{thm3}), respectively.

\subsection{Comparing with the state-of-the-art approach}
We test the quadratic function \eqref{pro2}  with seven kinds of settings of $v_j$ summarized in Table \ref{tab:3} taking from \cite{dai2019family}.
We set $n=1000$ and the other settings including the stop criterion are the same as  in Section \ref{subsec}.

\begin{table}[h]\centering
	\begin{center}
\begin{minipage}{160pt}
	\caption{Different settings of $v_j$ for Problem \eqref{pro2}.} \label{tab:3}
		\begin{tabular}{|c|c|} \hline
			&$v_j$      \\
			\hline
			1& $\{v_2,\cdots,v_{n-1}\}\subset(1,\kappa)$\\
\hline
2&\makecell[c]{$\{v_2,\cdots,v_{n/5}\}\subset(1,100)$\\
$\{v_{n/5+1},\cdots,v_{n-1}\}\subset(\frac{\kappa}{2},\kappa)$}\\
\hline
3&\makecell[c]{$\{v_2,\cdots,v_{n/2}\}\subset(1,100)$\\
$\{v_{n/2+1},\cdots,v_{n-1}\}\subset(\frac{\kappa}{2},\kappa)$}\\
\hline
4&\makecell[c]{$\{v_2,\cdots,v_{4n/5}\}\subset(1,100)$\\
$\{v_{4n/5+1},\cdots,v_{n-1}\}\subset(\frac{\kappa}{2},\kappa)$}\\
\hline
5&\makecell[c]{$\{v_2,\cdots,v_{n/5}\}\subset(1,100)$\\
$\{v_{n/5+1},\cdots,v_{4n/5}\}\subset(100,\frac{\kappa}{2})$\\
$\{v_{4n/5+1},\cdots,v_{n-1}\}\subset(\frac{\kappa}{2},\kappa)$}\\
\hline
6&\makecell[c]{$\{v_2,\cdots,v_{10}\}\subset(1,100)$\\
$\{v_{11},\cdots,v_{n-1}\}\subset(\frac{\kappa}{2},\kappa)$}\\
\hline
7&\makecell[c]{$\{v_2,\cdots,v_{n-10}\}\subset(1,100)$\\
$\{v_{n-9},\cdots,v_{n-1}\}\subset(\frac{\kappa}{2},\kappa)$}\\
\hline		
		\end{tabular}
\end{minipage}
	\end{center}
\end{table}

As reported in
\cite{dai2019family}, the following adaptive truncated cyclic scheme (ATC) is
the state-of-the-art dynamic BB steplength:
\begin{eqnarray}\label{ATC}
\alpha^{ATC}_k=\begin{cases}
\alpha^{BB1}_k, &\text{if}\mod(k,m)=0,\\
\widetilde{\alpha}_k,&\text{otherwise},
\end{cases}
\end{eqnarray}
where the cyclic length $m$ is a carefully-selected positive integer and
\begin{eqnarray*}
\widetilde{\alpha}_k=\begin{cases}
\alpha^{BB2}_k, &\text{if}~\alpha_{k-1}\le\alpha^{BB2}_k,\\
\alpha^{BB1}_k, &\text{if}~\alpha_{k-1}\ge\alpha^{BB1}_k,\\
\alpha_{k-1},&\text{otherwise}.
\end{cases}
\end{eqnarray*}

We conducted experiments to numerically compare the new steplengths $\alpha^{BB}(\gamma)$ \eqref{uBB} with $\alpha^{BB}(1)$ \eqref{b3} with $\alpha^{BB1}$ \eqref{b1}, $\alpha^{BB2}$ \eqref{b2}, $\alpha(\tau_k)$ \eqref{fBB}, as well as SDC \cite{de2014efficient}, CBB2 \cite{dai2006cyclic}, ATC \eqref{ATC}, ABBmin1 and ABBmin2 \cite{frassoldati2008new}. The parameter settings in the compared methods are the same as those in \cite{dai2019family}.

We determine the reference values $\tau^*$ and $\gamma^*$ of the optimal parameters in \eqref{fBB} and \eqref{uBB} using the performance profile. As a good approximation to the optimal parameter in \eqref{fBB}, we set $\tau^*=0.9,~0.96,~0.94$ for the fifth, last, and remaining cases in Table \ref{tab:3}, respectively. For the parameter in $\alpha^{BB}(\gamma)$,  we set $\gamma^*=20$ for the first and fifth cases,  and $\gamma^*=2000$ for the other cases
in Table \ref{tab:3}.
We tested ten randomly generated instances for each case and reported the average number of iterations in Table \ref{tab:4}.

Table \ref{tab:4} shows that the minimum number of iterations is primarily determined by ABBmin1, ABBmin2, and $\alpha^{BB}(\gamma)$. Among these, ABBmin1 accounts for the optimal proportion of 28.6 \%, ABBmin2 for 20.6 \%, and $\alpha^{BB}(\gamma)$ for 47.6 \%. Regardless of the tested tolerances, $\alpha^{BB}(\gamma^*)$ is the optimal choice in terms of the total number of iterations for all problems.
Although ABBmin1 and ABBmin2 have a competitive advantage over the best selection in \eqref{uBB}, ABBmin1 requires more storage than $\alpha^{BB}(\gamma)$, while ABBmin2 requires more calculations for matrix-vector multiplication compared to $\alpha^{BB}(\gamma)$. Additionally, ABBmin2 is exclusively applicable to strongly convex quadratic problems and cannot be extended to general problems. Therefore, when a carefully-selected $\gamma$ is employed, $\alpha^{BB}(\gamma)$ is a meaningful choice.

{\footnotesize
\begin{center}
\tabcolsep=0.1em
 \renewcommand{\arraystretch}{1.2}
 		\begin{longtable}{rrrrrrrrrrrrr}
	\caption{
The average number of iterations for various BB steplengths on problem \eqref{pro2} with seven distinct settings in Table \ref{tab:3}.
}\label{tab:4}\\
\hline
& $\kappa$& $\varepsilon$& BB1 & BB2  &SDC&CBB2 &ABBmin1 &ABBmin2 &ATC&$\alpha(\tau^*)$&$\alpha^{BB}(1)$ &$\alpha^{BB}(\gamma^*)$ \\ \hline
 \endfirsthead
\caption{continued}\\
\hline
& $\kappa$& $\varepsilon$& BB1 & BB2  &SDC&CBB2 &ABBmin1 &ABBmin2 &ATC&$\alpha(\tau^*)$&$\alpha^{BB}(1)$ &$\alpha^{BB}(\gamma^*)$\\
\hline
 \endhead
\hline
\endfoot
\hline
\endlastfoot
\multirow{9}*{1}& \multirow{3}*{$10^{4}$}& $10^{-6}$&  443.4 &  478.7 &  425.5 & 1417.9 &  324.2 &  \textbf{263.3} &  476.9 &  462.3 &  461.2 &  374.0   \\
& & $10^{-9}$&  954.6 & 1063.1 &  847.6 & 5832.8 &  580.2 &  \textbf{456.2} &  955.3 & 1041.5 &  953.4 &  843.2 \\
& & $10^{-12}$& 1542.7 & 1691.1 & 1372.9 & 4461.9 &  790.4 &  \textbf{598.6} & 1367.6 & 1550.8 & 1614.1 & 1281.7  \\
 \cline{2-13}
&\multirow{3}*{$10^{5}$} & $10^{-6}$ &  605.8 &  589.0 &  495.6 & 4512.2 &  417.9 &  \textbf{355.6} &  809.5 &  663.7 &  551.1 &  412.2\\
& & $10^{-9}$& 2699.0 & 3313.0 & 1917.6 & 20001.0 &  837.2 &  \textbf{533.6} & 1753.7 & 3276.6 & 2724.1 & 1595.6 \\
& & $10^{-12}$& 4994.8 & 5979.1 & 3872.3 & 20001.0 &  995.6 &  \textbf{918.5} & 3093.2 & 5440.7 & 4736.7 & 2320.1\\
 \cline{2-13}
& \multirow{3}*{$10^{6}$}& $10^{-6}$&  191.5 &  186.3 &  179.6 &  286.6 &  177.8 &  \textbf{162.4} &  497.2 &  200.0 &  193.3 &  195.4\\
& & $10^{-9}$& 6134.5 & 10378.9 & 4715.0 & 20001.0 &  994.9 &  \textbf{509.2} & 2436.7 & 7748.1 & 7344.3 & 1276.6   \\
& & $10^{-12}$& 18188.1 & 20001.0 & 8730.3 & 20001.0 & 1357.2 &  \textbf{687.8} & 4805.8 & 19064.1 & 20001.0 & 2595.8   \\
\hline
\multirow{9}*{2}& \multirow{3}*{$10^{4}$}&$10^{-6}$&  338.2 &  317.3 &  298.6 &  601.0 &  \textbf{232.8} &  258.2 &  334.1 &  321.2 &  334.1 &  277.0  \\
& & $10^{-9}$&  630.6 &  733.5 &  658.2 & 2222.0 &  \textbf{445.2} &  499.3 &  618.2 &  671.3 &  687.6 &  631.7  \\
& & $10^{-12}$&  988.5 & 1040.5 & 1059.5 & 9217.7 &  \textbf{740.9} &  905.7 &  872.7 &  991.9 &  972.4 &  854.7  \\
 \cline{2-13}
&\multirow{3}*{$10^{5}$} & $10^{-6}$&  498.0 &  577.9 &  389.1 & 14117.3 &  283.6 &  496.9 &  469.6 &  550.7 &  559.2 &  \textbf{192.0}  \\
& & $10^{-9}$& 1617.5 & 1718.6 & 1223.5 & 20001.0 &  713.7 & 1202.6 & 1035.4 & 1540.6 & 1600.5 &  \textbf{586.4}  \\
& & $10^{-12}$& 2464.8 & 3056.7 & 2154.0 & 20001.0 & 4248.5 & 18833.7 & 1680.3 & 2759.5 & 3205.7 & \textbf{1022.7} \\
 \cline{2-13}
& \multirow{3}*{$10^{6}$}& $10^{-6}$&  428.8 &  438.3 &  185.3 & 20001.0 &  183.8 &  358.6 &  325.9 &  344.6 &  491.1 &   \textbf{82.1}  \\
& & $10^{-9}$& 3691.4 & 3741.5 & 1596.8 & 20001.0 & 4786.8 & 3954.7 & 1159.5 & 4045.3 & 3564.0 &  \textbf{661.3} \\
& & $10^{-12}$& 6498.3 & 18133.2 & 3421.7 & 20001.0 & 20001.0 & 20001.0 & 2052.8 & 6358.9 & 17741.0 & \textbf{1059.4}\\
 \hline
\multirow{9}*{3}& \multirow{3}*{$10^{4}$}& $10^{-6}$&  350.7 &  385.3 &  346.1 & 2700.1 &  \textbf{250.3} &  311.6 &  354.3 &  367.5 &  373.1 &  302.3  \\
& & $10^{-9}$&  681.8 &  792.0 &  715.9 & 3691.3 &  \textbf{500.2} &  573.0 &  639.3 &  807.1 &  702.4 &  683.2  \\
& &$10^{-12}$& 1015.8 & 1128.0 & 1084.7 & 5041.8 &  \textbf{787.4} & 1222.9 &  938.4 & 1056.7 & 1055.5 &  993.4  \\
 \cline{2-13}
& \multirow{3}*{$10^{5}$}& $10^{-6}$&  639.7 &  591.4 &  448.2 & 20001.0 &  304.3 &  556.4 &  484.4 &  644.9 &  661.7 &  \textbf{206.4} \\
& & $10^{-9}$&  1603.5 & 1791.4 & 1300.5 & 20001.0 &  880.8 & 1379.0 & 1017.7 & 1584.6 & 1646.4 &  \textbf{670.7} \\
& & $10^{-12}$& 2513.7 & 3262.4 & 2227.5 & 20001.0 & 4924.2 & 18026.1 & 1435.3 & 2697.7 & 3020.8 & \textbf{1019.4}  \\
 \cline{2-13}
&\multirow{3}*{$10^{6}$} & $10^{-6}$&  749.9 &  770.2 &  311.4 & 20001.0 &  373.5 &  655.8 &  386.8 &  659.8 &  707.8 &  \textbf{117.9}\\
& & $10^{-9}$& 3525.8 & 4275.7 & 1735.6 & 20001.0 & 4937.7 & 4554.8 & 1291.1 & 3873.1 & 3779.1 &  \textbf{660.4}  \\
& &$10^{-12}$& 6673.5 & 19443.4 & 3208.1 & 20001.0 & 20001.0 & 20001.0 & 1814.4 & 7053.5 & 19701.3 & \textbf{1199.3}  \\
 \hline
\multirow{9}*{4}& \multirow{3}*{$10^{4}$}& $10^{-6}$&  400.6 &  435.1 &  402.3 &  728.2 &  \textbf{271.4} &  346.7 &  400.7 &  431.5 &  403.6 &  345.4\\
& & $10^{-9}$&  752.0 &  769.3 &  720.2 & 2102.3 &  \textbf{497.0} &  611.6 &  670.2 &  785.0 &  714.0 &  727.0  \\
& &$10^{-12}$& 1156.9 & 1119.2 & 1090.6 & 4027.6 &  \textbf{806.6} & 1461.8 &  914.0 & 1138.2 & 1153.7 &  997.2  \\
 \cline{2-13}
& \multirow{3}*{$10^{5}$}& $10^{-6}$&  774.7 &  817.9 &  529.4 & 20001.0 &  344.6 &  653.8 &  524.3 &  728.8 &  829.7 &  \textbf{256.1}\\
& & $10^{-9}$& 1691.2 & 1710.2 & 1300.0 & 20001.0 &  849.7 & 1365.7 & 1124.2 & 1776.1 & 1810.6 &  \textbf{678.5} \\
& & $10^{-12}$& 2751.2 & 3241.9 & 2369.1 & 20001.0 & 4461.5 & 20001.0 & 1540.3 & 2702.9 & 3048.6 & \textbf{1032.9} \\
 \cline{2-13}
& \multirow{3}*{$10^{6}$}& $10^{-6}$&  913.7 & 1059.4 &  396.4 & 20001.0 &  549.5 & 1042.5 &  457.6 &  933.9 &  944.5 &  \textbf{172.1}\\
& & $10^{-9}$& 4378.0 & 4301.6 & 1882.0 & 20001.0 & 6028.9 & 5519.5 & 1344.5 & 4410.1 & 4069.0 &  \textbf{745.7} \\
& & $10^{-12}$ & 6697.0 & 19877.0 & 3455.1 & 20001.0 & 20001.0 & 20001.0 & 2311.9 & 7524.5 & 20001.0 & \textbf{1267.6}\\
 \hline
\multirow{9}*{5}& \multirow{3}*{$10^{4}$}& $10^{-6}$&  603.5 &  606.8 &  571.2 & 1103.6 &  474.1 &  \textbf{430.4} &  574.7 &  638.5 &  571.0 &  690.5\\
& & $10^{-9}$& 1107.9 & 1101.8 & 1044.5 & 3513.1 &  901.4 &  \textbf{828.0} & 1076.5 & 1225.6 & 1131.8 & 1411.5 \\
& & $10^{-12}$& 1738.5 & 1965.4 & 1570.8 & 4138.8 & \textbf{1402.2} & 1432.8 & 1571.9 & 1878.7 & 1790.7 & 2398.4  \\
 \cline{2-13}
& \multirow{3}*{$10^{5}$}&$10^{-6}$& 1244.6 & 1327.4 & \textbf{1139.8} & 14496.1 & 1141.6 & 1238.7 & 1379.3 & 1240.7 & 1229.6 & 1373.5 \\
& & $10^{-9}$& 3411.8 & 4440.5 & 3087.5 & 20001.0 & \textbf{2457.4} & 2880.5 & 3486.8 & 3660.8 & 3507.3 & 6190.5  \\
& &$10^{-12}$ & 6008.2 & 7354.3 & 5415.9 & 20001.0 & \textbf{4235.9} & 19055.3 & 5603.9 & 5730.3 & 6744.9 & 10590.7\\
 \cline{2-13}
& \multirow{3}*{$10^{6}$}&$10^{-6}$ & 1393.1 & 1650.8 & 1326.9 & 18140.9 & 1214.2 & 1820.0 & 2535.9 & 1388.1 & 1761.9 & \textbf{1179.1}\\
& &$10^{-9}$& 11348.8 & 13933.5 & 8507.1 & 20001.0 & \textbf{6691.4} & 11611.6 & 10585.9 & 11923.2 & 11511.6 & 11894.2 \\
& & $10^{-12}$ & 18325.0 & 20001.0 & \textbf{16222.7} & 20001.0 & 18108.0 & 20001.0 & 16876.3 & 19632.5 & 20001.0 & 19877.1 \\
\hline
\multirow{9}*{6}&\multirow{3}*{$10^{4}$} & $10^{-6}$&  268.6 &  313.2 &  229.5 &  893.3 &  174.8 &  \textbf{173.6} &  267.8 &  268.0 &  283.8 &  241.9   \\
& & $10^{-9}$&  646.2 &  556.2 &  491.2 & 1892.0 &  221.1 &  \textbf{219.2} &  526.7 &  610.7 &  479.5 &  567.1\\
& & $10^{-12}$&  962.1 & 1010.2 &  764.1 & 9818.7 &  \textbf{326.4} &  338.1 &  737.2 & 1001.4 &  870.9 &  883.2   \\
 \cline{2-13}
& \multirow{3}*{$10^{5}$}& $10^{-6}$&  394.2 &  380.2 &  323.2 & 14052.3 &  175.9 &  239.9 &  339.2 &  369.7 &  328.0 &  \textbf{120.6}\\
& & $10^{-9}$& 1324.1 & 1382.1 &  843.0 & 20001.0 &  \textbf{338.2} &  413.8 &  713.6 & 1383.0 & 1281.9 &  513.8\\
& &$10^{-12}$& 2157.2 & 2841.8 & 1872.4 & 20001.0 & 1037.5 & 1844.9 & 1319.6 & 2590.1 & 2521.9 &  \textbf{816.4}\\
 \cline{2-13}
& \multirow{3}*{$10^{6}$}& $10^{-6}$&  132.3 &  151.9 &   60.5 & 20001.0 &   25.9 &   54.0 &  166.1 &  131.6 &   96.2 &   \textbf{23.8} \\
& & $10^{-9}$& 2228.4 & 2730.1 & 1157.6 & 20001.0 & 1062.6 & 1357.7 &  868.0 & 2729.6 & 3038.4 &  \textbf{434.7}  \\
& & $10^{-12}$& 4824.2 & 14834.7 & 2460.8 & 20001.0 & 11537.6 & 15616.5 & 1558.3 & 5890.1 & 12922.3 &  \textbf{930.4}  \\
 \hline
\multirow{9}*{7}& \multirow{3}*{$10^{4}$}& $10^{-6}$&  481.4 &  494.5 &  442.8 &  539.6 &  \textbf{308.3} &  389.8 &  454.5 &  489.0 &  454.1 &  389.8\\
& &  $10^{-9}$&  813.9 &  786.0 &  726.3 & 1136.4 &  \textbf{516.7} &  578.5 &  624.0 &  809.4 &  753.2 &  678.4\\
& & $10^{-12}$& 1155.8 & 1194.6 & 1129.0 & 3316.6 &  \textbf{828.8} & 1507.8 &  966.9 & 1300.2 & 1093.9 &  981.9   \\
 \cline{2-13}
&\multirow{3}*{$10^{5}$} & $10^{-6}$&  812.0 &  895.5 &  661.6 & 18043.3 &  421.8 &  653.3 &  565.3 &  732.8 &  831.3 &  \textbf{324.9}\\
& &$10^{-9}$& 1683.7 & 1793.4 & 1229.2 & 20001.0 &  926.8 & 1449.1 & 1051.0 & 1836.9 & 1755.9 &  \textbf{691.8}\\
& &$10^{-12}$ & 2565.1 & 3197.8 & 2204.4 & 20001.0 & 4970.1 & 15631.8 & 1754.8 & 2657.9 & 2980.8 & \textbf{1023.2} \\
 \cline{2-13}
&\multirow{3}*{$10^{6}$} & $10^{-6}$& 1326.1 & 1624.7 &  734.4 & 20001.0 &  576.3 & 1579.9 &  638.1 & 1412.3 & 1545.6 &  \textbf{246.3}\\
& & $10^{-9}$ & 4276.9 & 4010.8 & 1847.2 & 20001.0 & 3747.0 & 8764.4 & 1346.4 & 4077.3 & 4270.5 &  \textbf{726.4} \\
& & $10^{-12}$ & 6204.9 & 19130.2 & 3359.2 & 20001.0 & 19118.9 & 20001.0 & 2097.5 & 6687.7 & 19258.2 & \textbf{1225.1} \\
 \hline	
 \multicolumn{2}{r}{\multirow{3}*{Total}}&$10^{-6}$ & 12990.8 & 14091.8 & 9897.4 & 231639.4 & 8226.6 & 12041.4 & 12442.2 & 12979.6 & 13611.9 & \textbf{7523.3}\\
 &&$10^{-9}$ & 55201.6 & 65323.2 & 37546.5 & 300403.9 & 38914.9 & 49262.0 & 34324.7 & 59815.9 & 57325.5 & \textbf{32868.7}\\
 &&$10^{-12}$& 99426.3 & 169503.5 & 69045.1 & 320037.1 & 140680.7 & 218088.3 & 55313.1 & 105708.3 & 164436.4 & \textbf{54370.6}\\
 \hline

		\end{longtable}

\end{center}
}
\begin{remark}
We only used two fixed choices of $\gamma^*$, namely $20$ or $2000$, in Table \ref{tab:4}. It would achieve better numerical results if we select a more optimal $\gamma^*$ for each individual problem.
\end{remark}

\subsection{The planar Rosenbrock function}
Consider the  classical planar Rosenbrock function \cite{more1981testing}
\begin{eqnarray*}
	f(x)=100(x_2-x^2_1)^2+(1-x_1)^2,~(x_1^0,~x_2^0)=(-1.2,~1).
\end{eqnarray*}
Starting from the initial steplength $\alpha_0=1$,
we performed four independent runs of the BB methods with $\alpha^{BB1}$, $\alpha^{BB2}$, $\alpha^{BB}(1)$ \eqref{b3}, and $\alpha^{BB}(1.5)$, respectively. These runs incorporated the use of a non-monotonic line search, as described in \cite{raydan1997barzilai}.  We set the stop criterion as $\|(x^{k}_1,x^{k}_2)-(x^*_1,x^*_2)\|\le\epsilon$ together with a maximum iteration number $5000$, where $(x^*_1,x^*_2)=(1,1)$ is the minimizer of $f(x)$.
We reported in Table \ref{tab:2} the iteration numbers with different settings of $\epsilon$, where ``--'' stands for the situation that the maximum iteration number is reached.

It is observed that the BB method using $\alpha^{BB}(1.5)$ demonstrates the best performance. Additionally, the efficiency of
$\alpha^{BB}(1)$ is notably higher than that of both $\alpha^{BB1}$ and $\alpha^{BB2}$.
\begin{table}[h]
\begin{center}
\begin{minipage}{170pt}
\caption{Comparison of iteration numbers in minimizing the planar Rosenbrock function.} \label{tab:2}
\begin{tabular}{@{}ccccc@{}}
\toprule
$\epsilon$ &$\alpha^{BB1}$ &$\alpha^{BB2}$ &$\alpha^{BB}(1)$  &$\alpha^{BB}(1.5)$\\
\midrule
$10^{-1}$ & 78& -- & 32&\textbf{29}\\
			$10^{-2}$& 85& --& 38&\textbf{35}\\
			$10^{-4}$& 98& --& 44&\textbf{41}\\
			$10^{-8}$ & 102& --& 46&\textbf{43}  \\
\botrule
\end{tabular}
\end{minipage}
\end{center}
\end{table}

\subsection{More nonquadratic tests}
We conducted tests on several nonquadratic and potentially nonconvex functions $f(x)$, as outlined in \cite{andrei2008}. These tests were initialized using the standard point $x_0$ reported in \cite{andrei2008}. If $f(x_0-g_0/\|g_0\|_\infty)<f(x_0)$, we set $\alpha_0=1/\|g_0\|_\infty$, otherwise $\alpha_0=1/4\|g_0\|_\infty$, where $\|\cdot\|_\infty$ is the infinite norm. The stop criterion is either $\|g_k\|\le10^{-6}\|g_0\|$ or the number of iterations exceeds $10^5$. 

In Table \ref{tab:4}, we present a comparison of various steplength options, which include classical steplengths such as
$\alpha^{BB1}$ and $\alpha^{BB2}$, as well as ABBmin1, ATC, $\alpha^{BB}(1)$, and $\alpha^{BB}(\gamma^*)$. We assess their performance in conjunction with the non-monotonic line search \eqref{nls}. However, we exclude ABBmin2 from consideration as it is not applicable to nonquadratic functions.
The numerical results are presented in Table \ref{tab:5}. Evidently, $\alpha^{BB}(1)$ \eqref{b3} is very efficient in comparison with the others.
Moreover, for each case, the number of iterations with $\gamma=1$ can be further reduced with a better parameter $\gamma^*$ listed in the last column of Table \ref{tab:5}.
\begin{table}[h]
\begin{center}
\tabcolsep=0.1em
\begin{minipage}{\textwidth}
\caption{The number of iterations for different BB steplengths on  nonquadratic test functions.}\label{tab:5}
\begin{tabular*}{\textwidth}{@{\extracolsep{\fill}}lrrrrrrrr@{\extracolsep{\fill}}}
\toprule
function &$n$ &$\alpha^{BB1}$ &$\alpha^{BB2}$ &ABBmin1 &ATC &$\alpha^{BB}(1)$  &$\alpha^{BB}(\gamma^*)$  &$\gamma^*$    \\
\hline
BDQRTIC &5000& 29&28 &28 &27&28&\textbf{24}&8\\
   CRAGGLVY&5000 &436 &56 &60&46&58&\textbf{45}&7\\
CUBE&2 &79 &57 &69&58&55&\textbf{50}&12\\
Diagonal1&20&--&41&50&44&41&\textbf{33}&2\\
Diagonal2&1000&176&135&133&159&174&\textbf{114}&0.02\\
Diagonal3&2000&208&150&206&266&217&\textbf{146}&64\\
EG2&5000&--&--&174&--&--&\textbf{34}&130\\
Extended BD1&4000&18&--&--&18&10&\textbf{10}&1\\
Extended Freudenstein \& Roth &1000&--&32&19&--&21&\textbf{18}&57\\
Extended Rosenbrock&5000&99&47&64&61&39&\textbf{34}&6\\
FLETCHCR&50&551&--&--&990&--&\textbf{191}&1782\\
Hager&5000&73&62&74&71&61&\textbf{55}&18\\
   NONDQUAR &5000&201&88& 124&201&81&\textbf{74}&0.66\\
   Partial Perturbed Quadratic &10000&24&27&23&26&31&\textbf{21}&0.6\\
 POWER &2000&1187&956&688&535&783&\textbf{451}&8815\\
\botrule
\end{tabular*}
\end{minipage}
\end{center}
\end{table}

\begin{remark}
Regarding the $15$ randomly selected functions from \cite{andrei2008}, we found that, without using the non-monotonic line search, there exists a certain value of $\gamma$ for which $\alpha^{BB}(\gamma)$ converges and produces the best experimental results. However, other step length options may not converge for these functions. We should note that adding the non-monotonic line search in the experiments can increase the iteration time.
\end{remark}
\begin{remark}
It is important to note that even with a fixed $\gamma$, $\alpha^{BB}_k(\gamma)$ can be seen as a dynamic convex combination \eqref{ubbR}. Using a fixed $\gamma$ is equivalent to applying a nonlinear transformation to $\tau_k$ in \eqref{fBB}.  Experimental results demonstrated that $\alpha^{BB}_k(\gamma)$ with a fixed $\gamma$ outperforms $\alpha_k(\tau)$ with dynamically selected $\tau_k$. However, the choice of $\gamma$ in $\alpha^{BB}_k(\gamma)$ can also be made dynamically or combined with other design steplength techniques to achieve even better experimental results.
\end{remark}

\section{Conclusions}
We observed that  the two well-known BB steplengths correspond to the ordinary and the data least squares, respectively. Then, based on the scaled total least squares,
we present a new family of BB steplengths with a parameter $\gamma$.
We analyze the convergence of the BB method using the new steplengths.
As $\gamma$ approaches 0 and $+\infty$, the existing BB steplengths are recovered. The setting $\gamma=1$ corresponds to the total least squares. Numerical experiments demonstrate that the steplength with $\gamma=1$ outperforms the two classical BB steplengths in most of the tests. Moreover, the efficiency can be significantly improved with a carefully-selected $\gamma$. It seems that the best parameter $\gamma^*$ is problem dependent. How to (approximately) identify $\gamma^*$ is a critical issue in the future.

\section*{Declarations}
\textbf{Funding}
This research was supported by the National Natural Science Foundation of China under Grant 12171021 and
the Fundamental Research Funds for the Central Universities.
\\
\\
\textbf{Conflict of interest}
The authors declared that they have no conflicts of interest to this work.
\\
\\
\textbf{Availability of data and materials}
 The data and code that support the fndings of this study are available from the corresponding author upon request.


\begin{appendices}

\section{Proof of Proposition \ref{thm3}}\label{AA}
\begin{proof}
For simplicity, we let $a=\|s_{k-1}\|^2$, $b=\|y_{k-1}\|^2$, $c=(s_{k-1}^Ty_{k-1})^2$, $t=\frac{1}{\gamma^2}$. These symbols are used only in Appendix \ref{AA}.
We take the derivative of $\alpha^{BB}_k(t)$ with respect to $t$:
\begin{eqnarray}
[\alpha^{BB}_k(t)]'&=&\frac{1}{2\sqrt{c}}\left[\frac{2c-b(a-bt)}{\sqrt{(a-bt)^2+4ct}}-b\right]
\nonumber\\
&=&\frac{2c-b(a-bt)-b\sqrt{(a-bt)^2+4ct}}{2\sqrt{c}\sqrt{(a-bt)^2+4ct}}
\nonumber\\
&=&\frac{\left[2c-b(a-bt)\right]^2-b^2\left[(a-bt)^2+4ct\right]}{2\sqrt{c}\sqrt{(a-bt)^2+4ct}\left[2c-b(a-bt)+b\sqrt{(a-bt)^2+4ct}\right]}
\nonumber\\
&=&\frac{4c(c-ab)}{2\sqrt{c}\sqrt{(a-bt)^2+4ct}\left[2c-b(a-bt)+b\sqrt{(a-bt)^2+4ct}\right]}
\nonumber\\
&\leq&0, \label{AA0}
\end{eqnarray}
where inequality \eqref{AA0} follows from the assumption $c>0$ and the Cauchy-Schwartz inequality $c\le ab$. Therefore, $\alpha^{BB}_k(\gamma)$ is monotonically increasing with respect to $\gamma$.
 If we additionally assume that $s_{k-1}$ and $y_{k-1}$ are linearly independent, then  $(s^T_{k-1} y_{k-1})^2<\|s_{k-1}\|^2\|y_{k-1}\|^2$, i.e., $c<ab$. That is, inequality \eqref{AA0} strictly holds. It turns out that
  $\alpha^{BB}_k(\gamma)$ is strictly monotonically increasing respect to $\gamma$.

Moreover, by definitions, we can verify that
\begin{eqnarray*}
&&\lim_{\gamma\rightarrow +\infty}\alpha^{BB}_k(\gamma)=
\frac{s^T_{k-1} s_{k-1}}{s^T_{k-1} y_{k-1}}=\alpha^{BB1}_k,\\
&&\lim_{\gamma\rightarrow 0}\alpha^{BB}_k(\gamma)=
\frac{s^T_{k-1} y_{k-1}}{y^T_{k-1} y_{k-1}}=\alpha^{BB2}_k.
\end{eqnarray*}
We will now complete the proof of all properties of  $\alpha^{BB}_k(\gamma)$.
The properties of $\alpha^{BB'}_k(\gamma)$ can be proved similarly, and hence will be omitted.
\end{proof}
\section{Proof of Proposition \ref{thm}}\label{AB}
\begin{proof}
Dividing both sides of \eqref{uBB} by $s^T_{k-1} y_{k-1}$ yields:
\begin{eqnarray}
\alpha^{BB}_k(1)&=&\frac{\frac{\|s_{k-1}\|^2}{s^T_{k-1} y_{k-1}}-\frac{\|y_{k-1}\|^2}{s^T_{k-1} y_{k-1}}+\sqrt{( \frac{\| y_{k-1}\|^2}{s^T_{k-1} y_{k-1}}-\frac{\| s_{k-1}\|^2}{s^T_{k-1} y_{k-1}})^2+4}}{2}\nonumber
\\
&=&\frac{\alpha^{BB1}_k-\frac{1}{\alpha^{BB2}_k}
+\sqrt{\left(\frac{1}{\alpha^{BB2}_k}-\alpha^{BB1}_k\right)^2+4}}{2}\label{A1}.
\end{eqnarray}
To take the limit of \eqref{A1} yields:
$$\lim\limits_{\alpha^{BB2}_k\to\infty} \alpha^{BB}_k(1)=\alpha^{BB1}_k,
~\lim\limits_{\alpha^{BB1}_k\to 0} \alpha^{BB}_k(1)=\alpha^{BB2}_k.$$
The proof is complete.
\end{proof}

\section{Proof of Theorem \ref{thm6}}\label{AC}
\begin{proof}
For the case where $n=2$, the requirements stated in Theorem \ref{thm6} are identical to those of Theorem \ref{thm4}. As proven in \cite{dai2019family}, it is guaranteed that
$g_{k-1}\neq 0$ for any index $k$. Then, by its definition, we have $s_{k-1}\neq 0$.  Due to the positive definiteness of matrix
$A$, we can infer that
\[
s^T_{k-1}y_{k-1}=s^T_{k-1}As_{k-1}>0.
\]
Then, for any $\gamma>0$, by Corollary \ref{cor}, there exists a $\tau_k\in(0,1)$ such that
\[
\alpha^{BB}_k(\gamma)=\alpha_k(\tau_k).
\]

For any finite $n$, either $g_k=0$ for some finite $k$ or $s_{k-1}\neq 0$ from $g_k\neq 0$.

 Then, directly following from Theorems \ref{thm4} and \ref{thm5}  completes the proof.
\end{proof}

\end{appendices}

\bibliography{ref}


\end{document}